\newtheorem{thm}{Theorem}
\newtheorem{pro}{Proposition}
\newtheorem{rem}{Remark}
\newenvironment{proof}
{\noindent {\em Proof}} {\hfill $\Box$}
\numberwithin{thm}{section} \numberwithin{cor}{section}
\numberwithin{pro}{section}
\numberwithin{lem}{section} \numberwithin{equation}{section}
\newcommand{\R}{\mathbb R}
\numberwithin{equation}{section}
\newcommand{\bp}  {\begin{proof}}
\newcommand{\ep}  {\end{proof}}
\newcommand{\sfrac}[2]{{\textstyle \frac{#1}{#2}}}
\newcommand{\re}{\mathbb R}
\newcommand{\sph}{\mathbb S}
\begin{document}
\author{John Loftin* and Mao-Pei Tsui**}
\title{Limits of Solutions to a Parabolic Monge-Amp\`ere Equation}
\date{November 13, 2007}
\maketitle \leftline{*Department of Mathematics and Computer Science
,} \leftline{\text{ }\,Rugters University, Newark, NJ  07102 }
 \centerline{email: loftin@rutgers.edu }
\leftline{**Department of Mathematics, University of Toledo, Toledo,
OH 43606,USA} \leftline{\text{  }\,\,\,Taida Institute for
Mathematical Sciences, Taipei, Taiwan}\centerline{email:
Mao-Pei.Tsui@Utoledo.edu}

\section{Introduction}

In \cite{loftin-tsui06}, we study solutions to the affine normal
flow for an initial hypersurface $\mathcal L\subset \R^{n+1}$ which
is a convex, properly embedded, noncompact hypersurface.  The method
we used was to consider an exhausting sequence $\mathcal L_i$ of
smooth, strictly convex, compact hypersurfaces so that each
$\mathcal L_{i}$ is contained in the convex hull of $\mathcal
L_{i+1}$ for each $i$, and so that $\mathcal L_i \to \mathcal L$
locally uniformly.  If the compact $\mathcal L_i$ is the initial
hypersurface, the affine normal flow $\mathcal L_i(t)$ is
well-defined for all time $t$ from 0 to the extinction time $T_i$
\cite{chow85}. Then for all positive $t$, we define the affine
normal flow for initial hypersurface $\mathcal L$ as a limit
$\mathcal L(t) = \lim_{i\to\infty}\mathcal L_i(t)$. Ben Andrews
extensively studies the affine normal flow for compact initial
hypersurfaces \cite{andrews96,andrews00}.

The method of proof in \cite{loftin-tsui06} is to consider the
support functions $s_{\mathcal L_i} = s_i$ and to take the limit as
$i\to\infty$.  For each $Y\in\R^{n+1}$, the support function is
defined by
$$ s(Y)= s_{\mathcal L}(Y) = \sup_{x\in\mathcal L} \langle x,Y \rangle,$$
for $\langle\cdot,\cdot\rangle$ the Euclidean inner produce on
$\R^{n+1}$. It is immediate that $s$ is a convex function of
homogeneity one on $\R^{n+1}$.  The homogeneity property means that
it suffices to study the behavior of $s$ when restricted to the unit
sphere $\mathbb S^n\subset \R^{n+1}$. Also, $s$ restricted to an
affine hyperplane not touching the origin in $\R^{n+1}$ determines
$s$ on a half-space of $\R^{n+1}$.  We consider $s$ in this setting
primarily: If $Y=(y,-1)$ for $y\in\R^n$, then $s$ evolves under the
affine normal flow by
 \begin{equation} \label{ma-eq}
 \frac{\partial s}{\partial t} = - \left (\det
\frac{\partial ^2s }{\partial y^i \partial y^j} \right) ^{-
\frac1{n+2}}. \end{equation}
 Note that this setting of considering the restriction of $s$
to the hyperplane $\{y^{n+1}=1\}$ has its roots in the
Minkowski problem (see Cheng-Yau \cite{cheng-yau76}).

In the present paper, we consider our previous result primarily
the point of view of Equation (\ref{ma-eq})---in other words,
from more of a classical PDE point of view as opposed to the
largely tensorial point of view in \cite{loftin-tsui06}.  Also,
to the extent possible, we phrase the proofs in analytic terms,
and try not to rely too much on the affine geometry. In
particular, consider the support function $s_i$ of $\mathcal
L_i$. Then as $i\to\infty$, $s_i(Y)$ increases to the limit
$s(Y)$ for all $Y\in\R^{n+1}$ (this follows by the exhaustion
property of $\mathcal L_i\to \mathcal L$). The noncompactness
of $\mathcal L$ implies that $s$ is equal to $+\infty$ on at
least a half-space of $\R^{n+1}$. Let $\mathcal D^\circ (s)$ be
the largest open subset of $\R^{n+1}$ on which $s<\infty$.
($\mathcal D^\circ(s)$ is then the interior of the domain of
$s$, which is defined by $\mathcal D(s) = \{Y:s(Y)<+\infty\}$.)
Since $\mathcal D^\circ (s)$ is contained in an open half-space
of $\R^{n+1}$, we may (by choosing new coordinates if
necessary) restrict to the affine hyperplane
$\{Y=(y,-1):y\in\R^n\}$ and consider the limit $s_i\nearrow s$.

We make the following nondegeneracy assumptions about $\mathcal L$
and thus $s$.  First, assume that $\mathcal L$ does not contain any
lines. This is equivalent to
\begin{equation} \label{domain-open}
\mathcal D^\circ(s) \neq \emptyset
\end{equation} (see e.g.\
Rockafellar \cite{rockafellar}).  Also assume that $\mathcal L$ is a
hypersurface, and not a lower-dimensional set. So, in particular,
the convex hull $ \hat{\mathcal L}$ has nonempty interior, and thus
contains a small ball $B_\epsilon(P)$.  Thus $s=s_{\mathcal L} =
s_{\hat{\mathcal L}} \ge s_{B_\epsilon(P)}$, and there are $P\in
\R^{n+1}$ and $\epsilon>0$ so that for all $Y\in\R^{n+1}$,
\begin{equation} \label{convex-s-Y} s(Y) \ge
\epsilon|Y| + \langle P, Y \rangle
 \end{equation}
For $Y=(y,-1)$, this assumption becomes that there are $\epsilon
>0$, $p\in\R^n$ and $c\in R$ so that for all $y\in\R^n$,
\begin{equation} \label{convex-s}
s(y) \ge \epsilon\sqrt{|y|^2+1} + \langle p,y\rangle - c
\end{equation}
Also note that equation (\ref{convex-s-Y}) may be computed using the
following useful transformation law for the support function: If
$A\in \mathbf{GL}(n+1,\R)$ and $b\in \R^{n+1}$, then
\begin{equation} \label{affine-transform}
 s_{A\mathcal L + b} (Y) = s_{\mathcal L} (A^\top Y) + \langle b,
 Y\rangle.
\end{equation}
This rule is particularly useful, since the affine normal flow
is invariant under all affine volume-preserving maps of
$\R^{n+1}$. Note also that (\ref{affine-transform}) is
equivalent to a projective transformation of $s$ when
restricted to $\{y^{n+1}=-1\}$.

In terms of the support function functions, we consider
$s_i\nearrow s$, where the $s_i\!:\re^{n+1}\to\re$ are all
convex functions of homogeneity one on $\re^{n+1}\setminus
\{0\}$ which are smooth and strictly convex on each affine
hyperplane in $\re^{n+1}$ which does not pass through the
origin.  Then the affine normal flow $s_i(t)$ may be defined by
solving (\ref{ma-eq}) on affine coordinate hyperplanes
$\{y^i=\pm1\}$ and patching together the solutions. More
simply, $s_i|_{\mathbb S^n}$ solves a parabolic equation, and
thus we have existence and uniqueness for a short time (as
noted by Chow \cite{chow85} originally). Then we let $s_i\to s$
pointwise everywhere in $\re^{n+1}$, given the nondegeneracy
assumptions (\ref{domain-open}) and (\ref{convex-s}) and as
well that the interior of the domain $\mathcal D^\circ (s)$ is
contained in the half-space $\{y^{n+1}<0\}$.

Now for the affine normal flow, $s_i(t) \nearrow s(t)$ as
$i\to\infty$. On $\mathcal D^\circ (s)$, this is an increasing limit
of smooth strictly convex functions (and so $s(t)$ is Lipschitz a
priori). Our problem is then to examine which properties of the
solutions $s_i(t)$ to (\ref{ma-eq}) survive in the limit $s_i(t)
\nearrow s(t)$ on $\mathcal D^\circ(s)$. This will determine the
regularity properties of $s(t)$.  In particular, there are locally
uniform spacelike $C^{0,1}$ estimates on $s_i$ on $\mathcal
D^\circ(s)$ just by convexity. Uniform spacelike $C^2$ and
ellipticity estimates follow by a global speed estimate of Andrews
\cite{andrews00} which survives in the limit as $s_i\to s$ and a
local Pogorelov-type estimate of Guti\'errez-Huang
\cite{gutierrez-huang98}.  We also use a barrier due to Calabi
\cite{calabi72} to ensure we can apply Guti\'errez-Huang's estimate
to get locally uniform spacelike $C^2$ estimates on $s_i$ for all
positive $t$. Then Evans-Krylov theory applies to get locally
uniform parabolic $C^{2+\alpha,1+\alpha/2}$ estimates and standard
bootstrapping implies local $C^\infty$ convergence of $s_i\to s$ for
positive time $t$.

There is also an important estimate of Ben Andrews \cite{andrews96}
on $|C|^2$ associated to $s_i$ the support function  a compact,
smooth, strictly convex hypersurfaces $\mathcal L_i$, for a tensor
$C$ called the cubic form. This estimate shows that for any ancient
solution to the affine normal flow, $|C|^2=0$, which implies by a
classical theorem of Berwald that $\mathcal L$ is a quadratic
hypersurface.  In Section \ref{quadric-sec} below, we reproduce this
classical theorem from the point of view of the support function
$s$.

The first author is grateful for the support of the NSF under Grant
DMS0405873, and to the organizers of the 2007 Conference on
Geometric Analysis in Taipei, for the opportunity to speak and for
their kind hospitality during the conference. The second author
wishes to express his gratitude to Taida Institute for Mathematical
Sciences for providing an excellent research environment and the
support of C.S. Lin and Y.I. Lee. We would both like to thank D.H.\
Phong for his support and for his suggestion to write up our results
in this way.

\section{Support Function}
In this section, we compute some of the basic quantities of affine
differential geometry in terms of the support function $s$. In the
end of this section, we show that (\ref{ma-eq}) is equivalent to
affine normal flow.

 Let $F$ be a smooth embedding of
a strictly convex hypersurface in terms of an extended Gauss map.
This means $F=F(Y)$ for any vector $Y$ equal to a negative multiple
of the inward-pointing unit normal vector $\nu$ to the image of $F$.
So $F$ is a function from an collection of open rays in
$\re^{n+1}\setminus \{0\}$ to $\re^{n+1}$ which is homogeneous of
degree 0.  In particular, we have
$$ s(Y) = \langle F,Y \rangle. $$

The affine normal $\xi$ is a transverse vector field to the image of
$F$ which is invariant under the action of all volume-preserving
affine maps in $\re^{n+1}$.  We recall the basic tensors and
structure equations of affine differential geometry: For each $y$ in
the domain of $F$, consider the basis $F_1,\dots,F_n,\xi$ of
$\re^{n+1}$, write the derivatives of these basis elements in terms
of the same basis:
\begin{eqnarray*}
F_{ij} &=& (\Gamma_{ij}^k + C_{ij}^k)F_k + g_{ij} \xi, \\
\xi_i &=& - A_i^j F_j.
\end{eqnarray*}
Here $g_{ij}$ the affine metric, or affine second fundamental form,
is positive definite for strictly convex hypersurfaces;
$\Gamma_{ij}^k$ is its Levi-Civita connection; $C_{ij}^k$ is the
cubic form; and $A_i^j$ is the affine curvature, or affine shape
operator.

Now we derive the formula for the cubic form $C^k_{ij}$ in terms of
the support function $s$:

Under the extended Gauss map, the inward-pointing Euclidean unit
normal $\nu$ satisfies
\begin{equation}\label{nu} \nu = - \frac{Y}{|Y|} = \frac{(-y^1, \dots ,-y^n,
1)} {\sqrt{ 1 + |y|^2}},
\end{equation}
and the (Euclidean) second fundamental form is given by
\begin{equation}\label{h_ij}
h_{ij} = \frac {s_{ij}} {\sqrt{1+|y|^2}}, \qquad
h^{ij}=\sqrt{1+|y|^2}s^{ij}.\end{equation}
 The scalar function $\phi$ is defined to be $ (\det
h_{ij})^{\frac 1 {n+2}} (\det \bar g_{ij})^{-\frac 1 {n+2}}$ for
$\bar g_{ij}=\langle F_i,F_j\rangle$ the induced metric from
Euclidean $\re^{n+1}$. We compute (using the formula for $\bar
g_{ij}$ below)
 $$ \phi =
(1+|y|^2)^{-\frac{1}{2} }D^{-\frac{1}{n+2}}, $$ for $D = \det
s_{ij}$, and
$$\phi_k =-y^k
(1+|y|^2)^{-\frac{3}{2}
}D^{-\frac{1}{n+2}}-\frac{1}{n+2}(1+|y|^2)^{-\frac{1}{2} }
D^{-\frac{1}{n+2}}(\ln D)_k,$$ where $(\ln D)_k = s^{pq}s_{pqk}$. We
also define the vector field $Z^i$ by
\begin{equation}
\begin{split}
&  Z^i\\
 = & -h^{ik} \phi_k\\
 = & -\sqrt{1+|y|^2}s^{ik}[-y^k
(1+|y|^2)^{-\frac{3}{2} }
D^{-\frac{1}{n+2}}-\frac{1}{n+2}(1+|y|^2)^{-\frac{1}{2} }D^{-\frac{1}{n+2}}(\ln D)_k] \\
= &  D^{-\frac{1}{n+2}}[(1+|y|^2)^{-1
}s^{ik}y^k+\frac{1}{n+2}s^{ik}(\ln D)_k]
\end{split}
 \end{equation}

  $$ F
= (s_1,\dots, s_n, (s_l y^l) - s), $$
 \begin{equation}\label{F_i} F_i
= (s_{1i},\dots, s_{ni}, s_{li} y^l)\end{equation}

\begin{equation}\label{F_ij}F_{ij} = (s_{1ij},\dots, s_{nij}, s_{lij} y^l+s_{ij})
\end{equation}

In terms of the scalar function $\phi$ and the vector field $Z^i$
defined above, we define the affine normal $\xi$ as
\begin{equation} \label{an}
\begin{split} & \xi\\
 = \ & \phi \nu + Z^i F_i \\
 =  \ & (1+|y|^2)^{-\frac{1}{2} }D^{-\frac{1}{n+2}}\cdot \frac{(-y^1, \dots ,-y^n, 1)} {\sqrt{ 1 +
|y|^2}}\\
+ \ &  D^{-\frac{1}{n+2}}[(1+|y|^2)^{-1
}s^{ik}y^k+\frac{1}{n+2}s^{ik}(\ln D)_k](s_{1i},\dots,
s_{ni}, s_{li} y^l)\\
=  \ & D^{-\frac{1}{n+2}}(1+|y|^2)^{-1 }\cdot
(-y^1, \dots ,-y^n, 1) \\
+ \ &  D^{-\frac{1}{n+2}}(1+|y|^2)^{-1}(y^1, \dots ,y^n,
|y|^2)\\
+ \ & \frac{ D^{-\frac{1}{n+2}}}{n+2}((\ln D)_1,\dots, (\ln D)_n,
(\ln D)_i y^i)
\\
= \  & \frac{1}{n+2}D^{-\frac{1} {n+2}} ( (\ln D)_1, \dots,(\ln
D)_n, (n+2) +
(\ln D)_i y^i ).\\
\end{split}
 \end{equation}
The affine normal $\xi$ is invariant under volume-preserving affine
actions on $\re^{n+1}$. The affine metric (also called the affine
second fundamental form) $g_{ij}$ is invariant under the same group,
and is given by $g_{ij} = \phi^{-1} h_{ij}$. So compute

$$ g_{ij} = D^{\frac{1}{n+2}}
s_{ij}. $$

In terms of $s=s(y,-1)=s(Y)$, the embedding $F$ is given by

 $$ F
= (s_1,\dots, s_n, (s_i y^i) - s), $$

$$ g_{ij} = (\det s_{k\ell})^{\frac{1}{n+2}}
s_{ij} $$

$$ \partial_mg_{ij} = (\det s_{k\ell})^{\frac{1}{n+2}}
(\frac{1}{n+2}s^{pq}s_{pqm} s_{ij} +s_{ijm})$$
$$ g^{ij} = (\det s_{k\ell})^{-\frac{1}{n+2}}
s^{ij} $$

\begin{equation}
\begin{split} &  \label{Christ}\Gamma^k_{ij}\\
= \ & \sfrac12 g^{kl}(\partial_i g_{j\ell} + \partial_j g_{i\ell} -
\partial_\ell g_{ij})\\
= \ & \sfrac12  s^{kl}(\frac{1}{n+2}s^{mp}s_{mpi} s_{jl} +s_{jli} +
\frac{1}{n+2}s^{mp}s_{mpj}
s_{il} +s_{ilj} - \frac{1}{n+2}s^{mp}s_{mpl} s_{ij} - s_{ijl} )\\
= \ & \frac12 ( \frac1{n+2} s^{mp}s_{mpj}\delta^k_i + \frac1{n+2}
s^{mp}s_{mpi} \delta^k_j + s^{k\ell}s_{ij\ell} - \frac1{n+2}
s^{k\ell} s^{mp} s_{mp\ell} s_{ij} )\\
= \ & \frac12 ( \frac1{n+2}(\ln D)_j\delta^k_i + \frac1{n+2} (\ln
D)_i \delta^k_j + s^{k\ell}s_{ij\ell} - \frac1{n+2} s^{k\ell} (\ln
D)_{\ell} s_{ij} ),
\end{split}
 \end{equation}
where we define $D=\det s_{ij}$. Now compute the metric induced from
the Euclidean metric $\bar g_{ij}$.
\begin{eqnarray*}
\bar g_{ij} &=& \left\langle \frac{\partial F}{\partial y^i},
\frac{\partial F}{\partial y^j} \right\rangle \\
&=& \sum_{k,l=1}^n \frac{\partial^2s}{\partial y^i \partial y^k}
(y^ky^l + \delta^{kl}) \frac{\partial^2s}{\partial y^j \partial
y^l}, \\
\bar g^{ij} &=&  \sum_{k,l=1}^n s^{ik} (-\frac{y^ky^l}{1+|y|^2} +
\delta_{kl})s^{lj}, \text{where }  s^{mn}
\text{ is the inverse of } s_{ij}, \\
\det{\bar g_{ij}} &=& \det \left(\frac{\partial^2s}{\partial y^i
\partial y^k} \right) \det (y^ky^l + \delta^{kl}) \det \left(
\frac{\partial^2s}{\partial y^j \partial y^l}\right) \\
&=& (1+|y|^2)\det \left(\frac{\partial^2 s}{\partial y^i
\partial y^j} \right)^2.
\end{eqnarray*}

Recall that $ \xi =  \phi \nu + Z^k F_k $, $h_{ij}= \phi g_{ij}$ and
$$F_{ij}= g_{ij}(\phi \nu + Z^k F_k)+ (\Gamma^k_{ij}+
C^k_{ij})F_k.$$ So $$\langle F_{ij}, F_l \rangle =g_{ij}Z^k
\overline{g}_{kl}
+\Gamma^k_{ij}\overline{g}_{kl}+C^k_{ij}\overline{g}_{kl},$$
$$\langle F_{ij}, F_l \rangle\overline{g}^{lm} =g_{ij}Z^m
+\Gamma^m_{ij}+C^m_{ij},$$
$$C^m_{ij}=\langle F_{ij}, F_l \rangle\overline{g}^{lm} -g_{ij}Z^m
-\Gamma^m_{ij}$$ and, lowering the index by the affine metric
$C_{ijk} = C_{ij}^l g_{lk}$,
\begin{equation}
\label{cubic1}C_{ijk}=\langle F_{ij}, F_l
\rangle\overline{g}^{lm}g_{mk} -g_{ij}Z^mg_{mk}
-\Gamma^m_{ij}g_{mk}.
\end{equation}

First, we compute \begin{align*} &-g_{ij}Z^k \\
= \ &-(\det s_{k\ell})^{\frac{1}{n+2}} s_{ij}(\det
s_{rs})^{-\frac{1}{n+2}}[(1+|y|^2)^{-1
}s^{kl}y^l+\frac{1}{n+2}s^{kl}(\sum_{p,q}s^{pq}s_{pql})] \\
= \ & - s_{ij}[(1+|y|^2)^{-1 }s^{kl}y^l+\frac{1}{n+2}s^{kl}(\ln
D)_l]
\end{align*}

So  \begin{equation}
\begin{split} \label{C2}
 & -g_{ij}Z^lg_{lk}\\
= \ & - s_{ij}[(1+|y|^2)^{-1
}s^{lm}y^m+\frac{1}{n+2}s^{lm}(\sum_{p,q}s^{pq}s_{pqm})](\det
s_{rs})^{\frac{1}{n+2}} s_{lk}\\
= \ & -s_{ij}(\det s_{rs})^{\frac{1}{n+2}}[(1+|y|^2)^{-1
}y^k+\frac{1}{n+2}(\sum_{p,q}s^{pq}s_{pqk})]\\
= \ & -(\det s_{rs})^{\frac{1}{n+2}}[s_{ij}(1+|y|^2)^{-1
}y^k+\frac{s_{ij}}{n+2}(\ln D)_k]\\
\end{split}
 \end{equation}

Now, we compute \begin{align*} & \overline{g}^{lm}g_{mk}\\
= \ & s^{lp} (-y^py^q(1+y^2)^{-1}+ \delta_{pq})s^{qm}
D^{\frac{1}{n+2}} s_{mk}\\
= \ & D^{\frac{1}{n+2}} (-s^{lp}y^py^k(1+y^2)^{-1}+
s^{lk})\end{align*} and

\begin{align*}
 & \langle F_{ij}, F_l \rangle\\
 = \  & \langle (s_{1ij},\dots, s_{nij}, s_{rij} y^r+s_{ij}), (s_{1k},\dots, s_{nk}, s_{mk} y^m)
 \rangle\\
= \  & \sum_p s_{pij}s_{pl}+ s_{rij} y^rs_{ml} y^m+s_{ij} s_{ml}
y^m.
\end{align*}

So

\begin{equation}
\begin{split} \label{C1}
 & \langle F_{ij}, F_l \rangle \overline{g}^{lm}g_{mk}\\
 = \  &  D^{\frac{1}{n+2}} ( \sum_p s_{pij}s_{pl}+ s_{rij} y^rs_{ml} y^m+s_{ij} s_{ml}
y^m) (-s^{lq}y^qy^k(1+y^2)^{-1}+ s^{lk})\\
= \  &  D^{\frac{1}{n+2}} ( -\sum_p s_{pij}y^py^k(1+y^2)^{-1}-
s_{rij} |y|^2(1+y^2)^{-1}y^r
y^k-s_{ij}|y|^2(1+y^2)^{-1}  y^k \\
+ \ & s_{kij}+ s_{rij} y^ry^k+s_{ij}
y^k))\\
= \  &  D^{\frac{1}{n+2}} ( -\sum_p
s_{pij}y^py^k-s_{ij}|y|^2(1+y^2)^{-1}  y^k  +  s_{kij}+ s_{rij}
y^ry^k+s_{ij}
y^k))\\
\end{split}
 \end{equation}

\begin{equation}
\begin{split} & \label{C3} \Gamma^m_{ij}g_{mk}\\
= \ & \frac12 ( \frac1{n+2}(\ln D)_j\delta^m_i + \frac1{n+2} (\ln
D)_i \delta^m_j + s^{m\ell}s_{ij\ell} - \frac1{n+2} s^{m\ell} (\ln
D)_{\ell} s_{ij} )D^{\frac{1}{n+2}} s_{mk}\\
= \ & \frac{D^{\frac{1}{n+2}}}{2} ( \frac1{n+2}(\ln D)_js_{ik} +
\frac1{n+2} (\ln D)_i  s_{jk}+ s_{ijk} - \frac1{n+2} (\ln D)_{k}
s_{ij} )
\end{split}
 \end{equation}

From (\ref {C1}), (\ref{C2}), (\ref {C3}) and (\ref {cubic1}), we
have
 \begin{equation}\begin{split}\label{cubic} & C_{ijk}\\
= \  &  D^{\frac{1}{n+2}} \Big( -\sum_p s_{pij}y^py^k-s_{ij}|y|^2(1+y^2)^{-1}  y^k \\
+ \ & s_{kij}+ s_{rij} y^ry^k+s_{ij} y^k)\Big)-(\det
s_{rs})^{\frac{1}{n+2}}[s_{ij}(1+|y|^2)^{-1
}y^k+\frac{s_{ij}}{n+2}(\ln D)_k]\\
- \ & \frac{D^{\frac{1}{n+2}}}{2} ( \frac1{n+2}(\ln D)_js_{ik} +
\frac1{n+2} (\ln D)_i  s_{jk}+ s_{ijk} - \frac1{n+2} (\ln D)_{k}
s_{ij} )\\
= \  &  D^{\frac{1}{n+2}}\Big[\frac{1}{2} s_{ijk}-\frac1{2(n+2)}
s_{ki}(\ln D)_j - \frac1{2(n+2)} s_{kj}(\ln
D)_i-\frac{s_{ij}}{2(n+2)}(\ln D)_k \Big]
\end{split}\end{equation}

Now we prove that (\ref{ma-eq}) is equivalent to the affine normal
flow.
\begin{pro} \label{anf}
The affine normal flow $$\frac{\partial}{\partial t}F = \xi$$ is
equivalent to the evolution of the support function
$$ \frac{\partial s}{\partial t} = - \left (\det
\frac{\partial ^2s }{\partial y^i \partial y^j} \right) ^{-
\frac1{n+2}}.$$
\end{pro}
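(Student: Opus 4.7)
The strategy rests on two identities already established in the excerpt: the support-function identity $s(Y,t) = \langle F, Y \rangle$ with $Y=(y,-1)$, and the explicit parametrization $F = (s_1,\dots,s_n,\, s_i y^i - s)$ together with the explicit formula (\ref{an}) for the affine normal $\xi$. I will prove the two implications separately.

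\emph{Forward direction ($\partial_t F = \xi \Rightarrow \partial_t s = -D^{-1/(n+2)}$):} For fixed $Y=(y,-1)$, differentiate the identity $s = \langle F, Y \rangle$ in $t$ to obtain $\partial_t s = \langle \partial_t F, Y \rangle = \langle \xi, Y \rangle$. Substituting the expression (\ref{an}) for $\xi$ and pairing with $(y,-1)$, the first $n$ components contribute $\frac{1}{n+2} D^{-1/(n+2)} (\ln D)_i y^i$, while the last component contributes $-\frac{1}{n+2} D^{-1/(n+2)}\big((n+2) + (\ln D)_i y^i\big)$. The $(\ln D)_i y^i$ terms cancel and what remains is exactly $-D^{-1/(n+2)}$. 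A useful conceptual sanity check is that the tangential part $Z^k F_k$ of $\xi$ contributes nothing here, since $\langle F_i, Y \rangle = \sum_j s_{ij} y^j - s_{ij} y^j = 0$ — that is, $Y$ is normal to the image of $F$, so only the normal component $\phi\nu$ of $\xi$ pairs nontrivially with $Y$, yielding $-\phi\sqrt{1+|y|^2} = -D^{-1/(n+2)}$.

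\emph{Reverse direction (PDE on $s \Rightarrow \partial_t F = \xi$):} Since $F$ is expressed algebraically in terms of $s$ and its first $y$-derivatives, differentiate each component of $F = (s_1,\dots, s_n,\, s_i y^i - s)$ in $t$, using that $\partial_t$ commutes with $\partial_{y^k}$. Using the PDE $\partial_t s = -D^{-1/(n+2)}$, compute $\partial_t s_k = \partial_k(\partial_t s) = \frac{1}{n+2} D^{-1/(n+2)} (\ln D)_k$. Substituting, the first $n$ components of $\partial_t F$ are $\frac{1}{n+2} D^{-1/(n+2)} (\ln D)_k$, and the last component is $\frac{1}{n+2} D^{-1/(n+2)} (\ln D)_i y^i - (-D^{-1/(n+2)}) = \frac{1}{n+2} D^{-1/(n+2)}\big((n+2) + (\ln D)_i y^i\big)$. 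This matches precisely the formula (\ref{an}) for $\xi$.

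The main obstacle is essentially bookkeeping — keeping track of the sign and the factor $1/(n+2)$ consistently. No deep technical step is required: the formula for $\xi$ in (\ref{an}) has already absorbed all the real work of extracting the affine normal from $s$, so the proposition reduces to a one-line pairing in one direction and a direct term-by-term verification in the other.
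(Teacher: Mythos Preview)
Your proof is correct and follows essentially the same route as the paper: for the forward direction you differentiate $s=\langle F,Y\rangle$ and pair $\xi$ with $Y$ (the paper does this via the decomposition $\xi=\phi\nu+Z^iF_i$, which you mention as your sanity check), and for the reverse direction both you and the paper differentiate $F=(s_1,\dots,s_n,s_iy^i-s)$ componentwise and match against (\ref{an}).
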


\begin{proof}
We first compute $s_t$ from $F_t=\xi$: Recall that $ \nu = -
\frac{Y}{|Y|} = \frac{(-y^1, \dots ,-y^n, 1)} {\sqrt{ 1 + |y|^2}}$
which is independent of time in our coordinate system, since
${\partial_t}\nu=0$ (see \cite{loftin-tsui06}). Using the definition
$s= \langle F , Y \rangle$, $\xi
 = (1+|y|^2)^{-\frac12} D^{-\frac1{n+2}} \nu + Z^i F_i$ and
${\partial_t}Y=0$, we have
\begin{eqnarray*}
{\partial_t} s &=& \langle {\partial_t}  F , Y \rangle + \langle
F,{\partial_t}Y\rangle \\
&=&\left\langle \det (1+|y|^2)^{-\frac12} D^{-\frac1{n+2}} \nu +
Z^i F_i ,-(1 +|y|^2)^{\frac12} \nu \right\rangle \\
&=& -D^{-\frac1{n+2}}.
 \end{eqnarray*}

For good measure, we also compute $F_t$ from
$s_t=-D^{-\frac1{n+2}}$: Recall that the position function $F$ can
be expressed by the support function
$$ F = (s_1,\dots, s_n, (s_l y^l) - s).$$ Recall $D= \det (
\frac{\partial ^2s }{\partial y^i
\partial y^j})$. Note that
\begin{eqnarray*}
  s_t &=& -D^{-\frac1{n+2}}, \\
 s_{it} &=& s_{ti} = (-D^{-\frac1{n+2}})_{i} \\
 &=& \left[ \frac1{n+2} (\ln D)_i D^{-\frac1{n+2}}\right]
\end{eqnarray*}

 Compute
\begin{equation}\begin{split}\frac{\partial}{\partial t}F & =(s_{t1},
\dots, s_{tn}, s_{tl} y^l - s_t)\\
 & =  \frac1{n+2}  D^{-\frac1{n+2}}((\ln D)_1,\dots, (\ln D)_n, (\ln D)_l y^l
+n+2).
\end{split}\end{equation}

Recall that $$\xi =\frac{1}{n+2}D^{-\frac{1} {n+2}} ( (\ln D)_1,
\dots,(\ln D)_n, (n+2) + (\ln D)_i y^i )$$ from (\ref{an}).
Therefore $\frac{\partial}{\partial t}F = \xi$.

\end{proof}

\section{Andrews's Speed Estimate}
In this section, we repeat, for the reader's convenience, our
version of a speed estimate of Andrews \cite{andrews00}.

\begin{pro} \label{andrews-speed}
Let $s$ be the support function of a smooth strictly convex compact
hypersurface evolving under affine normal flow. If $s(Y,t)\ge r>0$
for all $Y\in\sph^n$ and $t\in [0,T]$, then
$$|\partial_t s| \le \left(C + C' t^{-\frac n{2n+2}}\right)s$$
on $\sph^n\times[0,T]$, where $C$ and $C'$ are constants only
depending on $r$ and $n$.
\end{pro}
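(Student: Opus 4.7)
The plan is to apply the parabolic maximum principle to a quantity combining the flow speed $\sigma := -\partial_t s = D^{-1/(n+2)}$ (positive by Proposition~\ref{anf}) with the support function, working in affine coordinates $Y=(y,-1)$ and then rotating to cover $\sph^n$ by finitely many such charts.

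First, I would derive the linearized evolution. Differentiating $s_t = -D^{-1/(n+2)}$ in $t$ and using $(\log D)_t = s^{ij}s_{ijt}$ yields $\sigma_t = \tfrac{\sigma}{n+2}\,s^{ij}\sigma_{ij}$, so $\sigma$ is annihilated by the linearized Monge--Amp\`ere operator $L := \tfrac{\sigma}{n+2}\,s^{ij}\partial_i\partial_j$, i.e.\ $(\partial_t - L)\sigma = 0$; and $s^{ij}s_{ij}=n$ gives $(\partial_t - L)s = -\tfrac{2(n+1)}{n+2}\,\sigma$. Next, by homogeneity the hypothesis $s\ge r$ on $\sph^n$ reads $s(y,-1,t)\ge r\sqrt{1+|y|^2}$; set $\psi(y):=\tfrac{r}{2}\sqrt{1+|y|^2}$ and $w := s-\psi\ge\psi>0$, and consider
$$Q\ :=\ \frac{\sigma}{w}.$$
At an interior spatial maximum of $Q$, the condition $\nabla Q = 0$ cancels the cross-terms in $LQ$ and a direct computation gives
$$(\partial_t - L)Q\ =\ \frac{Q^2}{n+2}\bigl[2(n+1)\ -\ s^{ij}\psi_{ij}\bigr].$$
Since $\psi_{ij}$ is uniformly positive-definite on bounded sets and $\det s^{ij}=\sigma^{n+2}$, the arithmetic--geometric mean inequality for positive matrices yields $s^{ij}\psi_{ij}\ge c(n,r)\,\sigma^{(n+2)/n}$, hence at such a maximum
$$(\partial_t - L)Q\ \le\ \frac{Q^2}{n+2}\bigl[2(n+1)\ -\ c'(n,r)\,Q^{(n+2)/n}\bigr].$$

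Finally, to accommodate possibly large $Q$ near $t=0$, I would apply the parabolic maximum principle to $\tilde Q := t^{\alpha}Q$ with $\alpha := n/(2n+2)$. At an interior space-time maximum, $(\partial_t - L)\tilde Q \ge 0$, which combined with the above inequality becomes
$$0\ \le\ \alpha\,\tilde Q/t\ +\ \frac{\tilde Q^2}{(n+2)t^{\alpha}}\bigl[2(n+1)\ -\ c'\,\tilde Q^{(n+2)/n}\,t^{-\alpha(n+2)/n}\bigr].$$
The choice $\alpha = n/(2n+2)$ is forced by $\alpha(2n+2)/n = 1$: it makes the time powers balance so that the ``good'' nonlinear term dominates, yielding a uniform bound $\tilde Q \le K(n,r)$ and hence $Q \le K\,t^{-n/(2n+2)}$. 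Together with the analogous spatial-only maximum-principle bound (which handles the large-$t$ regime and produces the constant $C$), this gives $\sigma \le (C + C't^{-n/(2n+2)})w \le (C + C't^{-n/(2n+2)})s$; recalling $|\partial_t s| = \sigma$ and rotating among finitely many affine charts to cover all of $\sph^n$ concludes the proof. The main obstacle is the careful organization of cross-terms and the AM--GM application so that the nonlinear good term $-c'Q^{(n+2)/n}$ uniformly dominates on bounded sets; the sharp exponent $\alpha=n/(2n+2)$ then drops out cleanly from the scaling in the last inequality.
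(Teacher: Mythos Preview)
Your approach is essentially the same as the paper's: both study the quotient $\sigma/(s-r/2)$ and exploit the parabolic maximum principle together with the AM--GM inequality $\operatorname{tr}(s^{ij}M_{ij})\ge n(\det s^{ij})^{1/n}(\det M)^{1/n}$ to produce the nonlinear good term $-c\,Q^{(n+2)/n}$. Your function $\psi(y)=\tfrac r2\sqrt{1+|y|^2}$ is exactly the homogeneous-degree-one extension of the constant $r/2$ from $\sph^n$ to the affine chart, so your $Q=\sigma/(s-\psi)$ is literally the paper's $q=-s_t/(s-r/2)$ pulled back from the sphere; your term $s^{ij}\psi_{ij}$ at $y=0$ is precisely the paper's extra contribution $\tfrac{r/2}{s-r/2}\,\delta_{ij}s^{ij}$ coming from the radial Euler identity at the south pole.

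Two points of execution differ, and one needs more care. First, the paper handles compactness by rotating coordinates at each time so that the maximum of $q$ over $\sph^n$ sits at the south pole $y=0$; there $\psi_{ij}(0)=\tfrac r2\delta_{ij}$ and the AM--GM bound is immediate with constants depending only on $r,n$. Your phrase ``uniformly positive-definite on bounded sets'' hides a gap: on a single affine chart the spatial maximum of $Q$ could occur at arbitrarily large $|y|$, where $\det\psi_{ij}=(r/2)^n(1+|y|^2)^{-(n+2)/2}\to 0$ and your $c'(n,r)$ degenerates. A fixed finite cover of $\sph^n$ does repair this, but only if you arrange that the global maximum always lies in the \emph{central} region $\{|y|\le 1\}$ of some chart; you should say this explicitly, or simply adopt the paper's rotation trick. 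Second, the paper extracts the decay by integrating the ODE $Q_t\le -aQ^{(3n+2)/n}+bQ^2$ for $Q(t)=\max_{\sph^n}q$, which directly yields $Q\le\max\{C_1,\,C_2 t^{-n/(2n+2)}\}$ with $C_1,C_2$ depending only on $n,r$. Your $t^\alpha$-weighted space-time maximum principle is a valid alternative, but as written the inequality at the maximum contains a factor $t_0^{1-\alpha}$ (from the $2(n+1)$ term), so the resulting bound on $\tilde Q$ a priori depends on $T$; you would need to split into $t\le 1$ and $t\ge 1$ (your ``spatial-only'' regime) to remove this, whereas the ODE integration handles both regimes at once.
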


\begin{proof}
Consider the function
 $$q = \frac{-\partial_t s}{s - r/2}.$$
We apply the maximum principle to $\log q = \log |\partial_ts| -
\log(s-r/2)$.  In particular, at a fixed time $t\in[0,T]$, consider
a point $Y\in\sph^n$ at which $q$ attains its maximum. By changing
coordinates, we may assume that this point $Y=(0,\dots,0,-1)$ is the
south pole.  Then, as in Tso \cite{tso85}, consider the coordinates
$y=(y^1,\dots,y^n)$ for $s$ restricted to the hyperplane
$\{(y^1,\dots,y^n,-1)\}$.  At $y=0$, we have for $i=1,\dots,n$
\begin{equation}
(\log q)_i =0 \quad \Longleftrightarrow \quad \frac{s_{ti}}{s_t} =
\frac{s_i}{s-r/2} \label{grad-zero}
\end{equation}
The condition for $(\log q)|_{\sph^n}$ to have a maximum at the
south pole is
\begin{equation}
 (\log q)_{ij} + (\log q)_{n+1}
\delta_{ij}\le 0 \label{neg-def}
\end{equation} as a symmetric matrix.
Here we use subscripts to denote ordinary differentiation
$f_i=\partial_{y^i}f$ and $f_t = \partial_t f$.

To compute the second term in (\ref{neg-def}), use Euler's
identities for a function of homogeneity one
$$\sum_{i=1}^{n+1} y^i s_{ti} = s_t, \qquad \sum_{i=1}^{n+1}
y^i s_i = s$$ at the point $Y=(0,\dots,0,-1)$ to conclude $s_{tn+1}
= -s_t$, $s_{n+1} = -s$,  and
$$ (\log q)_{n+1} =
\frac{r/2}{s-r/2}.
$$
For the first term in (\ref{neg-def}), compute
\begin{eqnarray*}
(\log q)_{ij} &=& \frac{s_{tij}}{s_t} - \frac{s_{ti}s_{tj}}{s_t^2} -
\frac{s_{ij}}{s-r/2} + \frac{s_is_j}{(s-r/2)^2} \\
&=& \frac{s_{tij}}{s_t}  - \frac{s_{ij}}{s-r/2}
\end{eqnarray*}
at $y=0$ by (\ref{grad-zero}).  Thus (\ref{neg-def}) becomes at
$y=0$
\begin{equation}
\label{neg-def2} \frac{r/2}{s-r/2}\delta_{ij} + \frac{s_{tij}}{s_t}
- \frac{s_{ij}}{s-r/2} \le 0.
\end{equation}

Now, we compute using the flow equation (\ref{ma-eq})
\begin{eqnarray*}
(\log q)_t &=& \partial_t \log |\partial_t s| - \partial_t
\log(s-r/2) \\
&=& -\frac1{n+2}\,\partial_t \log\det (s_{ij}) - \frac{s_t}{s-r/2} \\
&=& -\frac1{n+2} \,s^{ij} s_{tij} - \frac{s_t}{s-r/2}
\end{eqnarray*}
for $s^{ij}$ the inverse matrix of $s_{ij}$. Then (\ref{neg-def2})
implies that
\begin{eqnarray*}
(\log q)_t &\le&  \frac{r/2}{n+2} \cdot \frac{s_t}{s-r/2}
\delta_{ij} s^{ij} - \frac{2n}{n+2} \cdot \frac{s_t}{s-r/2} \\
&=& -\frac{r/2}{n+2} \,q \,\delta_{ij}s^{ij} + \frac{2n}{n+2}\, q, \\
q_t &\le& -\frac{r/2}{n+2} \,q^2\, \delta_{ij}s^{ij} +
\frac{2n}{n+2} \,q^2.
\end{eqnarray*}
Now if we let $\mu_i$ be the eigenvalues of $s^{ij}$, or
equivalently the reciprocals of the eigenvalues of $s_{ij}$, then we
see
$$ |s_t| = (\det s_{ij})^{-\frac1{n+2}} = \left(\prod_{i=1}^n \mu_i
\right)^{\frac1{n+2}} \le \left(\frac1n \sum_{i=1}^n \mu_i
\right)^{\frac n{n+2}} = \left(\frac1n \,\delta_{ij}s^{ij}
\right)^{\frac n{n+2}}$$ by the arithmetic-geometric mean
inequality.  Therefore,
$$ \delta_{ij}s^{ij} \ge n |s_t|^{\frac{n+2}n} = n q^{\frac{n+2}n}
(s-r/2)^{\frac{n+2}n} \ge n q^{\frac{n+2}n} (r/2)^{\frac{n+2}n}$$
since $s\ge r$.  And so finally, at $y=0$, and thus at any maximum
point of $q|_{\sph^n}$,
\begin{equation} \label{qt-at-max}
q_t \le -\frac{n(r/2)^{\frac{2n+2}n}}{n+2}\, q^{\frac{3n+2}n} +
\frac{2n}{n+2} \, q^2.
\end{equation}

Now define $Q(t) = \max_{Y\in \sph^n} q(Y,t)$.  Then
(\ref{qt-at-max}) implies that
$$ Q_t \le -Q^2\left(c_n r^{\frac{2n+2}n} Q^{\frac{n+2}n}
-c_n'\right)$$ for constants $c_n,c_n'$ depending only on $n$.
Therefore,
 \begin{equation} \label{Q-bound}
 Q\le \max \left\{c_n r^{-\frac{2n+2}{n+2}}, c_n' r^{-1}
t^{-\frac n{2n+2}}\right\}
 \end{equation}
for $c_n,c_n'$ new constants depending only on $n$. The result
easily follows.

\begin{rem} $Q$ may not be differentiable as a function of $t$, but the
above estimate (\ref{Q-bound}) still holds---see e.g.\ Hamilton
\cite[Section 3]{hamilton86}.
\end{rem}

\end{proof}

\section{Guti\'errez-Huang's Hessian Estimate}
Again, for the convenience of the reader, we reproduce our version
of Guti\'errez-Huang's Pogorelov-type estimate
\cite{gutierrez-huang98} for solutions to the Monge-Amp\`ere
equation.

First we define a \emph{bowl-shaped domain} in spacetime and its
\emph{parabolic boundary.}  A set $\Omega\subset\R^n\times \R$ is
bowl-shaped if there are constants $t_0 < T$ so that $$\Omega =
\bigcup_{t_0\le t\le T} \Omega_t\times \{t\},$$ where each
$\Omega_t$ is convex and $\Omega_{t_1}\subset \Omega_{t_2}$ whenever
$t_1<t_2$.  The parabolic boundary of $\Omega$ is then
$\partial\Omega \setminus (\Omega_T\times\{T\}).$

\begin{pro} \label{gh-est}
Let $s$ be a smooth solution to (\ref{ma-eq}) which is convex in
$y$, and let $\Omega$ be a bowl-shaped domain in space-time
$\R^n\times\R$ so that $s=0$ on the parabolic boundary of $\Omega$.
Let $\beta$ be any unit direction in space.

Then at the maximum point $P$ of the function $$w = |s|\,\,
\partial^2_{\beta\beta} s\,\, e^{\frac12(\partial_\beta s)^2},$$
$w$ is bounded  by a constant depending on only  $s(P)$, $\nabla
s(P)$ and $n$.
\end{pro}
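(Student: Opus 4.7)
My plan is a maximum principle argument applied to $\phi = \log w = \log|s| + \log s_{\beta\beta} + \frac{1}{2} s_\beta^2$. Because $s_t = -D^{-1/(n+2)} < 0$ and $s$ vanishes on the parabolic boundary of $\Omega$, while $s$ is convex on each $\Omega_t$, one has $s \le 0$ throughout $\Omega$, so $w \ge 0$ with $w = 0$ on the parabolic boundary. The maximum is thus attained at a spacetime interior point $P = (y_0, t_0)$, where $\phi_i(P) = 0$, the spatial Hessian $(\phi_{ij}(P))$ is negative semidefinite, and $\phi_t(P) \ge 0$. These combine into $L[\phi](P) \ge 0$ for the linearization
\begin{equation*}
L = \partial_t - a\, s^{ij} \partial_i \partial_j, \qquad a = \frac{1}{n+2} D^{-1/(n+2)} = -\frac{s_t}{n+2} > 0.
\end{equation*}

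I would next compute $L[\phi]$ summand by summand. Using $s^{ij} s_{ij} = n$ and $s_t = -(n+2)a$ gives $L[\log|s|] = -(2n+2)a/s + a s^{ij} s_i s_j/s^2$. Differentiating (\ref{ma-eq}) once in the direction $\beta$ yields $s_{t\beta} = a(\log D)_\beta$; differentiating again together with the identity $(\log D)_{\beta\beta} = s^{ij} s_{ij\beta\beta} - s^{ip} s^{jq} s_{pq\beta} s_{ij\beta}$ isolates $s^{ij} s_{\beta\beta ij}$, and the $s_{t\beta\beta}$ contributions cancel to produce
\begin{equation*}
L[\log s_{\beta\beta}] = \frac{a\, s^{ij} s_{\beta\beta i} s_{\beta\beta j}}{s_{\beta\beta}^2} - \frac{a\, s^{ip} s^{jq} s_{pq\beta} s_{ij\beta}}{s_{\beta\beta}} - \frac{a (\log D)_\beta^2}{(n+2) s_{\beta\beta}}.
\end{equation*}
For the third summand, the identities $s^{ij} s_{\beta i} s_{\beta j} = s_{\beta\beta}$ and $s^{ij} s_{\beta ij} = (\log D)_\beta$, together with $s_{t\beta} = a(\log D)_\beta$, force a parallel cancellation giving $L[\frac{1}{2} s_\beta^2] = -a s_{\beta\beta}$. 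The exponential factor in $w$ is placed there precisely so that this linear-in-$s_{\beta\beta}$ term appears to counterbalance the leading contribution arising from $L[\log s_{\beta\beta}]$.

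Substituting the first-order condition $s_{\beta\beta i}/s_{\beta\beta} = -s_i/s - s_\beta s_{\beta i}$ into $s^{ij} s_{\beta\beta i} s_{\beta\beta j}/s_{\beta\beta}^2$ and invoking the two identities above collapses $L[\phi](P) \ge 0$ to
\begin{equation*}
a s_{\beta\beta}(1 - s_\beta^2) \le -\frac{(2n+2) a}{s} + \frac{2 a\, s^{ij} s_i s_j}{s^2} + \frac{2 a s_\beta^2}{s} - \frac{a\, s^{ip} s^{jq} s_{pq\beta} s_{ij\beta}}{s_{\beta\beta}} - \frac{a (\log D)_\beta^2}{(n+2) s_{\beta\beta}}.
\end{equation*}
The main obstacle is the positive term $2 a\, s^{ij} s_i s_j/s^2$, which is not a priori controlled by $s$ and $\nabla s$ alone. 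To dispose of it I would choose coordinates at $P$ with $\beta = e_1$ and $s_{ij}(P)$ diagonal with eigenvalues $\lambda_1 = s_{\beta\beta}, \ldots, \lambda_n$, re-express the first-order condition componentwise as $s_i/s = -s_{11 i}/\lambda_1 - s_1 \lambda_1 \delta_{i1}$, and bound the resulting sum $\sum_i s_{11 i}^2/(\lambda_1^2 \lambda_i)$ against the favorable positive quadratic form $\sum_{i,j} s_{ij1}^2/(\lambda_1 \lambda_i \lambda_j)$ already present on the right, using Cauchy-Schwarz to absorb the mixed cross term $s_1 s_{111}/\lambda_1$. After this absorption the remaining right-hand side is controlled by a quantity depending only on $|s(P)|$, $|\nabla s(P)|$, and $n$, which yields the desired upper bound for $s_{\beta\beta}(P)$, and hence for $w(P) = |s(P)| s_{\beta\beta}(P) e^{(s_\beta(P))^2/2}$.
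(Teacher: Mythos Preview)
Your proposal is essentially the paper's own proof, repackaged through the linearized operator $L=\partial_t - a\,s^{ij}\partial_i\partial_j$ rather than treating $\phi_t\ge 0$ and $s^{ij}\phi_{ij}\le 0$ separately; the two presentations are algebraically equivalent and lead to the same inequality $0\ge s_{11}+\frac{2n+2}{s}-\frac{s_1^2}{s^2 s_{11}}$, hence the same quadratic bound on $w$.

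One point to sharpen: in your closing step you propose to ``use Cauchy--Schwarz to absorb the mixed cross term $s_1 s_{111}/\lambda_1$.'' Cauchy--Schwarz by itself will not close this, because after absorbing $2\sum_i s_{11i}^2/(\lambda_1^2\lambda_i)$ into $\sum_{i,j}s_{ij1}^2/(\lambda_1\lambda_i\lambda_j)$ you are left with a residual $+s_{111}^2/\lambda_1^3$ together with the cross term, and completing the square only gives a \emph{lower} bound on that combination. What the paper does (and what you in fact already have available) is to use the first-order condition once more in the $i=1$ component, namely $s_{111}/\lambda_1 = -s_1/s - s_1\lambda_1$, to replace $s_{111}$ \emph{exactly}. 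This makes the $s_1^2\lambda_1$ terms cancel against the $s_1^2 s_{\beta\beta}$ piece on the left and yields the clean inequality above without any inequality of Cauchy--Schwarz type.
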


\begin{proof}
Choose coordinates so that $\beta = (1,0,\dots,0)$ and so that at a
maximum point $P$ of $w$, $s_{ij}$ is diagonal (in order to bound
all second derivatives $s_{\beta\beta}$, it suffices to focus only
on the eigendirections of the Hessian of $s$).

Since $w$ is positive in $\Omega$ and $0$ on the parabolic boundary,
there is a point $P$ outside the parabolic boundary of $\Omega$ at
which $w$ assumes its maximum value.  We work with $\log w$ instead
of $w$.  Then at $P$,
$$(\log w)_i = 0, \qquad (\log w)_t \ge 0,
\qquad (\log w)_{ij} \le 0.$$
 Here we use $i,j,t$ subscripts for
partial derivatives in $y^i$, $y^j$ and $t$, and the last inequality
is as a symmetric matrix.  These equations become, at $P$,
\begin{eqnarray}
&\displaystyle\frac{s_i}s + \frac{s_{11i}}{s_{11}} + s_1 s_{1i} =
0,&
\label{grad-w-zero}\\
&\displaystyle\frac{s_t}s + \frac{s_{11t}}{s_{11}} + s_1 s_{1t} \ge
0,&
\label{t-deriv}\\
&\displaystyle \frac{s_{ij}}s - \frac{s_i s_j}{s^2} +
\frac{s_{11ij}}{s_{11}} - \frac{s_{11i}s_{11j}}{s_{11}^2} +
s_{1j}s_{1i} + s_1s_{1ij} \le 0.& \label{hess-neg}
\end{eqnarray}
To use (\ref{t-deriv}), we compute, for $D=\det s_{ij}$,
\begin{eqnarray*}
s_{1t} &=& \left(D^{-\frac1{n+2}} \right)_1 = \frac 1 {n+2} \,
D^{-\frac 1{n+2}}
s^{ij}s_{ij1},\\
 s_{11t}
&=& D^{-\frac1 {n+2}} \left[- \frac 1{(n+2)^2}(s^{ij}s_{ij1})^2  -
\frac1{n+2} \, s^{ik}s^{jl}s_{kl1}s_{ij1} + \frac1 {n+2} \,
s^{ij}s_{ij11}\right].
\end{eqnarray*}
Now plug into (\ref{t-deriv}) and divide out by $D^{-\frac1{n+2}}$
to find
 \begin{eqnarray} \nonumber
 &\displaystyle
\frac1{s_{11}} \left[ - \frac1{(n+2)^2}(s^{ij}s_{ij1})^2  - \frac 1
{n+2} \, s^{ik}s^{jl}s_{kl1}s_{ij1} + \frac 1{n+2} \, s^{ij}s_{ij11}
\right]&
\\ &\displaystyle {}-\frac1 s + s_1 (  \frac 1 {n+2} \,
s^{ij}s_{ij1}) \ge 0 & \label{u-11t}
\end{eqnarray}

The last term of the first line of (\ref{u-11t}) leads us to
contract (\ref{hess-neg}) with the positive-definite matrix $s^{ij}$
so that at $P$:
\begin{eqnarray*}
0 &\ge& s^{ij}\left(\frac{s_{ij}}s - \frac{s_i s_j}{s^2} +
\frac{s_{11ij}}{s_{11}} - \frac{s_{11i}s_{11j}}{s_{11}^2} +
s_{1j}s_{1i} + s_1s_{1ij} \right) \\
&=& \frac ns -\frac{2s^{ij}s_is_j}{s^2} +
\frac{s^{ij}s_{11ij}}{s_{11}} - \frac{s^{ij}s_is_1s_{1j}}s -
\frac{s^{ij}s_js_1s_{1i}}s \\
&&{}- s^{ij}s_1^2s_{1i}s_{1j} + s^{ij}s_{1j}s_{1i} +
s^{ij}s_1s_{1ij} \qquad \qquad \mbox{(by (\ref{grad-w-zero}))}\\
&=& \frac ns -\frac{2s^{ij}s_is_j}{s^2} +
\frac{s^{ij}s_{11ij}}{s_{11}} - \frac{2s_1^2}s  - s_1^2s_{11} +
s_{11} + s^{ij}s_1s_{1ij} \\
&& \qquad \qquad \mbox{(since $s_{ij}$ is diagonal at $P$)}\\
 &\ge& \frac ns -\frac{2s^{ij}s_is_j}{s^2} - \frac{2s_1^2}s
- s_1^2s_{11} + s_{11} + s^{ij}s_1s_{1ij} + \frac{n+2}{ s}  \\
&&{}- s_1s^{ij}s_{ij1}  + \frac{(s^{ij}s_{ij1})^2}{(n+2) s_{11}}
 + \frac{s^{ik}s^{jl}s_{kl1}s_{ij1}}{s_{11}} \\
&& \qquad \qquad \mbox{(by (\ref{u-11t}))}\\
 &\ge & \frac
{2n+2}s - 2 \sum_{i=1}^n\frac{s_i^2}{s^2s_{ii}} - \frac{2s_1^2}s -
s_1^2s_{11} + s_{11} + \sum_{i,j=1}^n
\frac{s_{ij1}^2}{s_{11}s_{ii}s_{jj}}
\end{eqnarray*}
by collecting terms, completing the square, and since $s_{ij}$ is
diagonal at $P$. Continue computing
\begin{eqnarray*}
0 &\ge& \frac {2n+2}s - 2\sum_{i=1}^n\frac{s_i^2}{s^2s_{ii}} -
\frac{2s_1^2}s - s_1^2s_{11} + s_{11}  + \frac{s_{111}^2}{s_{11}^3}
+
2\sum_{i=2}^n \frac{s_{11i}^2}{s_{11}^2s_{ii}} \\
&=& \frac {2n+2}s - \frac{2s_1^2}{s^2s_{11}} - \frac{2s_1^2}s -
s_1^2s_{11} + s_{11}  + \frac{s_1^2}{s_{11}s^2} + \frac{2s_1^2}s +
s_1^2s_{11}
\end{eqnarray*}
by (\ref{grad-w-zero}) and since $s_{ij}$ is diagonal at $P$.
Finally, collect terms so that
 $$0\ge
s_{11} + \frac{2n+2}s  + \frac1{s_{11}}
\left(-\frac{s_1^2}{s^2}\right)
$$
and multiply each side of the inequality by $s^2s_{11}e^{s_1^2}$ to
find a quadratic inequality $$w^2 + a w + b \le 0$$ for
$w=|s|s_{11}e^{\frac12 s_1^2}$ at $P$ the point in $\Omega$ at which
the maximum of $w$ is achieved. The coefficients $a$ and $b$ involve
only $n$, $s(P)$ and $s_1(P)$, and so there is an upper bound of $w$
on $\Omega$ depending on only these quantities.
\end{proof}

This bounds $s_{ij}$ away from infinity, which, together with
Andrews's speed estimate, shows that the ellipticity is locally
uniformly controlled in the interior of appropriate bowl-shaped
domains. In the next section, we use barriers essentially due to
Calabi \cite{calabi72} to ensure that appropriate bowl-shaped
domains exist, and so Guti\'errez-Huang's estimate applies.

\section{Barriers}
We will use two soliton solutions to the affine normal flow as inner
and outer barriers.  First of all, the unit sphere is a shrinking
soliton, and we use its affine images, ellipsoids, as inner
barriers.  Since the ellipsoids are compact, their support functions
are finite and smooth on all $\re^{n+1}$, and the usual maximum
principle applies:  If for an ellipsoid $E$, $s_E\le s_i$ on all
$\re^{n+1}$ (which is equivalent to the inclusion of convex hulls
$\hat E \subset \widehat{\mathcal L_i}$ for $\mathcal L_i$ the
hypersurface whose support function is $s_i$), then the maximum
principle for parabolic equations on $\mathbb S^n$ shows that
$s_E(t)\le s_i(t)$ for all positive $t$ before the extinction time
of $s_E(t)$.

The outer barrier we use is an expanding soliton due to Calabi
\cite{calabi72}.  Upon taking an affine transformation, its support
function $s_{\mathcal C}$ has $\mathcal D^\circ(s_{\mathcal C})$ an
open cone over a simplex, and has the value of a linear function
there. (Outside its domain, recall the support function is
$+\infty$.) Moreover, under the affine normal flow, $s_{\mathcal
C}(t)$ satisfies Dirichlet conditions on the boundary, and is
continuous and finite on the closure of its domain.  These
properties make Calabi's example very useful as an outer barrier (as
exploited by Cheng-Yau \cite{cheng-yau77,cheng-yau82} for the
elliptic real Monge-Amp\`ere equation).

Recall that $s_i\nearrow s$, where $s_i$ are the support functions
of strictly convex smooth compact hypersurfaces $\mathcal L_i$ which
approach $\mathcal L$. On $\mathcal D^\circ(s)$, as $s_i\nearrow s$
uniformly on compact subsets, and since the $s_i$ are convex, we
automatically have uniform $C^0$ and $C^1$ estimates on compact
subsets of $\mathcal D^\circ (s)$.  We define $s(t) = \lim_{i\to
\infty} s_i(t)$ for positive $t$ also, and so we have locally
uniform $C^0$ and $C^1$ estimates for positive $t$ as well.

To get similar uniform local ellipticity bounds for small positive
$t$, we need to check the hypotheses of Propositions
\ref{andrews-speed} and \ref{gh-est} as well.  For Proposition
\ref{andrews-speed}, we must ensure that $s_i(Y)\ge r$ for all large
$i$, $t\in [0,T]$, and $Y\in\sph^n$.  The affine normal flow of a
sphere provides a lower barrier to show this.  In particular, we
have the solution corresponding to the affine normal flow of a
sphere centered at the origin.  For any $r_0>0$, let
\begin{equation} \label{sphere-solution} u(Y,t) = r(t)|Y|, \qquad r(t) = \left( r_0^{\frac{2n+2}
{n+2}} - \frac {2n+2} {n+2} \, t \right) ^{\frac{n+2}{2n+2}}.
\end{equation}
Then $u$ satisfies the affine normal flow equation for a
support function.  Now the nondegeneracy assumption
(\ref{convex-s-Y}) shows that we can use the transformation law
(\ref{affine-transform}) with $A$ the identity matrix and
$b=-P$ to show $s(Y)\ge \epsilon$ for all $Y\in\sph^n$. Thus
(\ref{sphere-solution}) and the maximum principle show that for
$r=\epsilon/2$ there is a $T>0$ so that for all $t\in[0,T]$ and
$Y\in\sph^n$, and large $i$, we have $s_i(Y,t)\ge r$.  Thus we
can apply Andrews's estimate for all time in $t\in[0,T]$.

\begin{pro} \label{long-time}
Let $\mathcal L$ be a noncompact convex properly embedded
hypersurface in $\re^{n+1}$ which contains no lines.  Then the
affine normal flow $\mathcal L(t)$ exists for all positive time
$t>0$.
\end{pro}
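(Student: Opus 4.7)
The plan is to combine the sphere and Calabi barrier arguments with Andrews's speed estimate (Proposition \ref{andrews-speed}) and Guti\'errez-Huang's Hessian estimate (Proposition \ref{gh-est}) to obtain locally uniform $C^\infty$ bounds on the approximating support functions $s_i(t)$, and then pass to the limit to conclude that $s(t)=\lim_{i\to\infty}s_i(t)$ is a smooth convex solution of (\ref{ma-eq}) on $\mathcal{D}^\circ(s)\times(0,\infty)$; this gives $\mathcal{L}(t)=\lim_{i\to\infty}\mathcal{L}_i(t)$ as a hypersurface flow for all $t>0$.

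First I check that the $s_i(t)$ exist for arbitrarily long times. Since $\mathcal{L}$ is noncompact, its convex hull $\widehat{\mathcal{L}}$ is unbounded, so by the exhaustion property $\widehat{\mathcal{L}_i}$ contains inscribed Euclidean balls of radius $r_i\to\infty$. Comparison with the sphere solution (\ref{sphere-solution}) then shows that the extinction time $T_i$ of $\mathcal{L}_i(t)$ satisfies $T_i\to\infty$, so for any fixed $T>0$ the $s_i(t)$ are smooth strictly convex functions on $\sph^n\times[0,T]$ for all large $i$.

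Next, fix $T>0$ and $y_0\in\mathcal{D}^\circ(s)$. The sphere barrier argument already developed gives a uniform lower bound $s_i(Y,t)\ge\epsilon/2$ on $\sph^n\times[0,T]$ (after translating so that $P=0$), so Proposition \ref{andrews-speed} yields an upper bound on $|\partial_t s_i|$ on $\sph^n\times(0,T]$; by (\ref{ma-eq}) this is a positive locally uniform lower bound on $\det\partial^2 s_i$. For the matching upper Hessian bound I use an affine image of Calabi's soliton $s_{\mathcal{C}}$: by the invariance of the flow under volume-preserving affine maps and the transformation law (\ref{affine-transform}), together with the simplex-cone structure of $\mathcal{D}^\circ(s_{\mathcal{C}})$, one arranges that $s_i\le s_{\mathcal{C}}$ in a neighborhood of $y_0$, so that for an appropriate linear function $\ell$ the sublevel sets $\Omega_t=\{y:s_i(y,t)-\ell(y)\le 0\}$ are bounded and form a bowl-shaped domain (they increase in $t$ because $\partial_t s_i<0$) with $s_i-\ell=0$ on the parabolic boundary. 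Since $s_i-\ell$ solves the same equation as $s_i$, Proposition \ref{gh-est} applied to $s_i-\ell$ then gives a locally uniform upper bound on the second derivatives of $s_i$.

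Together these give uniform interior ellipticity on compact subsets of $\mathcal{D}^\circ(s)\times(0,T]$; Evans--Krylov theory then gives $C^{2+\alpha,1+\alpha/2}$ estimates, and standard parabolic bootstrapping gives locally uniform $C^\infty$ bounds. Passing to the limit, $s_i\to s$ in $C^\infty_{\mathrm{loc}}$ on $\mathcal{D}^\circ(s)\times(0,T]$, and $s$ solves (\ref{ma-eq}); since $T$ is arbitrary, $\mathcal{L}(t)$ exists for all $t>0$. The main obstacle is the construction of the Calabi outer barrier around each $y_0\in\mathcal{D}^\circ(s)$: one must choose an affine image of $s_{\mathcal{C}}$ whose simplex domain is positioned so that $s_{\mathcal{C}}\ge s$ in a neighborhood of $y_0$ while still being large enough for the bowl-shaped domain $\Omega_t$ to have interior containing $y_0$ for all $t$ in the relevant range; this uses the nondegeneracy hypotheses (\ref{domain-open}) and (\ref{convex-s-Y}) and the affine invariance of the flow in an essential way.
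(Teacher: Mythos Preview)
Your argument has a genuine gap at the very first step. You claim that because $\widehat{\mathcal L}$ is unbounded, the exhausting $\widehat{\mathcal L_i}$ contain Euclidean balls of radii $r_i\to\infty$. This is false in general: an unbounded convex body containing no lines need not contain arbitrarily large balls. For a concrete example, take $\widehat{\mathcal L}=\{(x',x^{n+1})\in\re^n\times\re : |x'|\le 1,\ x^{n+1}\ge 0\}$, a half-cylinder. It is unbounded, contains no line, but every inscribed ball has radius at most $1$. Consequently you cannot conclude $T_i\to\infty$ from a sphere comparison alone, and the rest of your argument (which needs the $s_i(t)$ to exist on $[0,T]$ for arbitrary $T$) does not get off the ground.

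The paper's proof repairs exactly this point by exploiting the \emph{affine} invariance of the flow rather than Euclidean invariance. Noncompactness gives a ray $\{v+tw:t\ge0\}\subset\widehat{\mathcal L}$, and combined with the nondegeneracy (\ref{convex-s-Y}) one can inscribe, for each $j$, an ellipsoid $E_j$ with $n$ minor semi-axes of length $\epsilon$ and one major semi-axis of length $\epsilon j$ along the ray direction. Under a volume-preserving linear map $E_j$ is equivalent to a round sphere of radius $\epsilon j^{1/(n+1)}$, whose extinction time under (\ref{sphere-solution}) tends to infinity with $j$. Since the $s_{E_j}$ are lower barriers for $s$, this forces the flow to exist for all time. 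Note that this is the entire content of the paper's proof of the proposition: it is purely a barrier argument showing non-extinction. The regularity machinery you invoke (Calabi outer barriers, Proposition~\ref{gh-est}, Evans--Krylov, bootstrapping) is what the paper uses in the surrounding discussion to prove Theorem~\ref{smooth-conv}, not Proposition~\ref{long-time} itself.
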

\begin{proof}
 We will phrase this in terms of the support function.  Since
 $\mathcal L$ is noncompact, there is a ray $R=\{v+tw: t\ge0\}$ contained in the convex
 hull $\hat{\mathcal L}$.  We may choose coordinates so that $w = (0,1)\in
 \re^n\times \re$.  Therefore, the support function $$s(Y)=
 s_{\mathcal L}(Y) \ge s_R(Y) =
 \left\{ \begin{array}{c@{\quad \mbox{for}\quad}c} + \infty & y^{n+1}>0 \\
\langle v, Y \rangle & y^{n+1}\le 0
 \end{array} \right.$$
We will use this estimate, together with the nondegeneracy
assumption (\ref{convex-s-Y}) to provide a lower barrier.  In
particular, there is an $\epsilon>0$ so that $s(Y)=+\infty$ for
$y^{n+1}>0$ and $$s(Y) \ge \epsilon|Y| + \langle v,Y\rangle \quad
\mbox{for} \quad y^{n+1}\le0.$$

The barrier we will use is, for $y=(y^1,\dots,y^n)$ and
$Y=(y,y^{n+1})$, $$s_{E_j}(Y) = \epsilon \sqrt {|y|^2 +
(jy^{n+1})^2 } + \langle v,Y\rangle + jy^{n+1}.$$ This is the
support function of an ellipsoid centered at $P + (0,j)$ with
$n$ minor axes of length $\epsilon$ and one major axis of
length $\epsilon j$. Clearly for all $j>1$, $s_{E_j}(Y)\le
s(Y)$. As $j\to\infty$, the ellipsoid is equivalent, under a
volume-preserving affine map, to a sphere of radius $\epsilon
j^{\frac1{n+1}}$, which also goes to infinity. Now
(\ref{sphere-solution}) shows that the extinction time of the
ellipsoid under the affine normal flow goes to infinity as
$j\to\infty$.  Since the $s_{E_j}$ are all lower barriers to
$s$ (which is equivalent to the ellipsoids $E_j$ being inside
the convex hull $\hat{\mathcal L}$), we have that the affine
normal flow applied to $s$ must exist for all time.
\end{proof}

Now to find appropriate bowl-shaped domains to apply Proposition
\ref{gh-est}, we use an upper barrier due to Calabi.  This barrier
is first used in the real elliptic Monge-Amp\`ere equation by
Cheng-Yau \cite{cheng-yau77,cheng-yau82}. Calabi's example is based
on the fact that the hypersurface
$$\mathcal C(t) = \left\{(x^1,\dots,x^{n+1})\in\re^{n+1} : x_i>0, \,
\prod_{j=1}^{n+1} x^j = k>0 \right\}$$ is an expanding soliton for
the affine normal flow (which evolves by setting the parameter
$k=k(t)$ for an appropriate function). At time $t=0$, we set the
hypersurface
$$\mathcal C(0)= \left\{(x^1,\dots,x^{n+1})\in\re^{n+1} : x_i\ge0,
\, \prod_{j=1}^{n+1} x^j = 0 \right\}$$ the boundary of the first
orthant in $\R^{n+1}$. The support function of this example is given
for $c_n = (n+1)^{\frac12} ( \frac2{n+2} ) ^ {\frac{n+2}2}$ :
\begin{equation} \label{calabi-ex}
 s_{\mathcal C} (Y,t) = \left\{ \begin{array}{cl} + \infty & \mbox{
 if any } y^i > 0 \\
- (n+1) \left( c_n t^{\frac{n+2}n} \prod_{i=1}^{n+1} |y^i| \right)
^{\frac 1{n+1}} & \mbox{ if all } y^i\le 0
\end{array}
 \right.
\end{equation}
Note in particular that for time $t=0$, $s_{\mathcal C}(Y,0)$
is 0 on the closed orthant on which all the $y^i\le0$ and is
$+\infty$ elsewhere.  In order to find a more flexible class of
barriers, we can apply (\ref{affine-transform}) to transform
$s_{\mathcal C}$ by a volume-preserving affine map
$\Phi\!:x\mapsto Ax+b$ to be any linear function $\langle
b,Y\rangle$ on any linear image $(A^\top)^{-1} \mathcal C$, and
$+\infty$ elsewhere.  In our standard affine coordinates
$Y=(y,-1)$, we find that the support function of $\mathcal
C(0)$ can be transformed to have its domain be a simplex (this
is a projective image of the first orthant in $\re^n$), and the
value of $s_{\Phi \mathcal C}(0)$ is any affine function of $y$
on this domain.  The graphs of these functions will give us the
flexibility to create upper barriers for the support function
which ensure that the function $s$ does move by a certain
amount under the affine normal flow. This in turn gives a
bowl-shaped domain in which to apply Guti\'errez-Huang's
interior estimates for the Hessian of $s$.

Assume that the domain $\mathcal D^\circ(s)$ is contained in
the lower half-space of $\re^{n+1}$. So since $s$ has
homogeneity one, $s$ can be described by its behavior on the
affine hyperplane $\mathcal H = \{(y,-1) : y\in\re^n\}$.  For
the remainder of this section, we consider the domain $\mathcal
D^\circ(s)$ to be a subset of $\re^n$, as identified with the
affine plane $\mathcal H$.

Each $x\in \mathcal D^\circ(s)$ has a convex neighborhood
$\mathcal N$ on which $s_i \to s$ uniformly as an increasing
sequence of convex functions, and so that the Lipschitz norms
$\|s_i\|_{C^{0,1}} (\mathcal N)$ are bounded by a constant $C$
independent of $i$.  By adding linear functions (constant in
$t$) to the $s_i$, we may assume $s_i(x)=0$ and $\nabla
s_i(x)=0$. This normalization does not affect the
Monge-Amp\`ere equation (\ref{ma-eq}) or the Hessian of $s_i$
(and so the $C^2$ estimates we derive apply to the original
$s_i$ as well). We can choose points $p_1,\dots p_{n+1}$ so
that $0\le s_i(y) \le C'$ for $C'$ a constant independent of
$i$ and $y$ in the convex hull $\mathcal Q$ of the $p_j$.  We
may also assume that $x$ is in the interior of $\mathcal Q$.
Now consider the simplices $\mathcal S_j$ to be the convex hull
in $\re^n$ of the points
$$x,p_1,\dots,p_{j-1},\widehat{p_j}, p_{j+1}, \dots,
p_{n+1},$$ where $p_j$ is omitted from the list.  Define $P_j$
to be an affine function on each $\mathcal S_j$ which is equal
to $C'$ on each of the $p_k\in \mathcal S_j$ and is equal to 0
at $x$, and define $P_j$ to be $+\infty$ outside $\mathcal
S_j$. Then define $P(y) = \min_j P_j(y)$. Then it is clear that
$P$ is satisfies $P_j(y) \ge P(y)\ge s_i(y)$ for all $i$ and
for all $y\in\re^n$.

We do not know the explicit solution to the Monge-Amp\`ere
equation (\ref{ma-eq}) with initial value $P$, but all we need
to show to produce uniformly large bowl-shaped domains centered
at $x$ for each of the $s_i$ is that $P(x,t)<0$ for positive
$t$.  This can be verified as follows: By the discussion above,
$P_j$ is the image of Calabi's example $\mathcal C(0)$ under an
affine transformation $z\mapsto A z + b$ of $\re^{n+1}$. By the
explicit solution (\ref{calabi-ex}) and the transformation law
(\ref{affine-transform}), we see that $P(t,y)\le P_j(t,y)<0$
for small $t>0$ and all $y$ near $x$ on the ray from $x$ to the
barycenter of $\mathcal S_j$.  Therefore, since $P(t,y)$ is
convex in $y$ and $x$ is in the convex hull of the barycenters
of the $\mathcal S_j$, we have shown that $P(t,x)<0$ for all
small positive $t$.

By the maximum principle, each sub-level set of each $s_i$ contains
a sub-level set of $P$, which shows that  $x\in \mathcal D^\circ(s)$
has a uniformly large bowl-shaped domain around it for each $s_i$
independently of $i$.  So Guti\'errez-Huang's Hessian estimates are
uniform in every compact subset of $\mathcal D^\circ (s) \times
(0,T]$ for small $T$.

By standard techniques, both Guti\'errez-Huang's and Andrews's
estimates can be extended in time to be uniform in compact subsets
of $\mathcal D^\circ(s)\times(0,\infty)$.  These estimates uniformly
control the spacelike $C^2$ norm and the ellipticity of $s_i$.  Then
the Monge-Amp\`ere equation allows us to apply Krylov's regularity
theory to get local uniform $C^{2+\alpha,1+\alpha/2}$ estimates,
which can then be bootstrapped to show

\begin{thm} \label{smooth-conv}
On $\mathcal D^\circ(s)\times (0,\infty)$, $s_i\to s$ in the
$C^\infty_{\rm loc}$ topology.
\end{thm}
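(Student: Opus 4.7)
The plan is to upgrade the pointwise monotone convergence $s_i\nearrow s$ to smooth local convergence by establishing, on every compact subset $K\subset \mathcal D^\circ(s)\times(0,\infty)$, uniform-in-$i$ parabolic $C^{2+\alpha,1+\alpha/2}$ bounds on $s_i$, and then bootstrapping via the Monge-Amp\`ere structure of equation (\ref{ma-eq}). The starting point is free: because each $s_i$ is convex in $y$ and $s_i\nearrow s$ uniformly on compact subsets of $\mathcal D^\circ(s)$ (and likewise for each positive time, which we extend to by defining $s(t)=\lim_i s_i(t)$), convexity already yields uniform $C^0$ and $C^1$ spacelike bounds on any compact $K$.

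The next step is to produce uniform spacelike $C^2$ and uniform ellipticity estimates, and this is where the content of Sections 3--5 is used. First, apply Andrews's speed estimate (Proposition \ref{andrews-speed}): the lower barrier given by the shrinking sphere soliton (\ref{sphere-solution}) combined with the nondegeneracy assumption (\ref{convex-s-Y}) guarantees $s_i(Y,t)\ge r>0$ on $\sph^n\times[0,T]$ uniformly in $i$, yielding a uniform upper bound on $|\partial_t s_i|=D_i^{-1/(n+2)}$, i.e.\ a uniform positive lower bound on $\det(\partial_i\partial_j s_i)$. Second, for each $x\in\mathcal D^\circ(s)$, use the Calabi-type upper barrier $P=\min_j P_j$ constructed just before the theorem to exhibit a bowl-shaped domain around $(x,t)$ of definite size in which $s_i$ vanishes on the parabolic boundary after subtracting suitable affine data; then Proposition \ref{gh-est} yields a uniform upper bound on the spacelike Hessian $\partial^2_{\beta\beta}s_i$ in every direction $\beta$. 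Together with the determinant lower bound, this gives locally uniform two-sided bounds on the eigenvalues of $\partial^2_{ij}s_i$, hence uniform ellipticity on $K$.

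With uniform ellipticity and uniform spacelike $C^2$ bounds in hand, equation (\ref{ma-eq}) becomes a uniformly parabolic fully nonlinear equation whose operator is concave in the Hessian argument (since $X\mapsto -(\det X)^{-1/(n+2)}$ is concave on positive definite matrices). Therefore the Evans--Krylov--Safonov theory applies to each $s_i$ on compact subsets and delivers uniform interior $C^{2+\alpha,1+\alpha/2}$ estimates, with constants independent of $i$. Differentiating equation (\ref{ma-eq}) and applying linear parabolic Schauder theory (all coefficients now being $C^{\alpha,\alpha/2}$ uniformly) then bootstraps these bounds to uniform $C^{k+\alpha,(k+\alpha)/2}$ estimates on $K$ for every $k$.

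Finally, Arzel\`a--Ascoli on $C^k$ and a diagonal argument extract a subsequence converging in $C^\infty_{\rm loc}$ to some limit $\tilde s$ on $\mathcal D^\circ(s)\times(0,\infty)$. Since $s_i\to s$ pointwise, the limit must be $s$, and uniqueness of the pointwise limit forces the full sequence to converge in $C^\infty_{\rm loc}$, proving the theorem. The main obstacle is really packaged inside the earlier propositions: the delicate point is making the Guti\'errez--Huang hypothesis actually hold simultaneously for all large $i$ near a given point $x$, which is exactly what the Calabi-barrier construction (producing a common bowl-shaped domain of definite size for all $s_i$) is designed to achieve; and the extension from a short time interval $(0,T]$ to all of $(0,\infty)$ follows by applying the same arguments starting from any positive initial time using Proposition \ref{long-time} to guarantee global existence.
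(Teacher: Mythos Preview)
Your proposal is correct and follows essentially the same route as the paper: uniform $C^{0,1}$ bounds from convexity, Andrews's speed estimate via the sphere lower barrier, Guti\'errez--Huang's Hessian estimate inside the Calabi-barrier bowl-shaped domains, then Evans--Krylov and bootstrapping. You supply a bit more detail than the paper does (the concavity of $X\mapsto -(\det X)^{-1/(n+2)}$, the Arzel\`a--Ascoli/diagonal extraction, and the uniqueness-of-limit argument to upgrade from subsequential to full convergence), but the strategy and the key inputs are identical.
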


Also note that in \cite{loftin-tsui06} we use the same inner and
outer barriers to show
\begin{pro}
 Under the affine normal flow, $s$ satisfies a Dirichlet boundary
 condition on $\partial \mathcal D(s)$.
\end{pro}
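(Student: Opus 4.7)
The plan is to sandwich $s(Y,t)$ at each boundary point $Y_0 \in \partial \mathcal D(s)$ using monotonicity of the flow from above and the elongated-ellipsoid inner barriers from the proof of Proposition \ref{long-time} from below.

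\emph{Upper bound.} The evolution equation (\ref{ma-eq}) gives $\partial_t s_i \le 0$ for each smooth compact approximation $\mathcal L_i$, so $s_i(Y,t) \le s_i(Y,0)$ for every $Y$ and $t$. Passing to the limit yields $s(Y,t) \le s(Y,0)$ pointwise on $\mathcal D^\circ(s)$. Because the support function $s(\cdot, 0)$ of the closed convex set $\hat{\mathcal L}$ is continuous as an extended real-valued function on $\mathcal D(s)$, taking $Y \to Y_0 \in \partial \mathcal D(s)$ inside the domain gives
\[
\limsup_{Y \to Y_0,\, Y \in \mathcal D^\circ(s)} s(Y,t) \;\le\; s(Y_0,0).
\]

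\emph{Lower bound at finite boundary points.} Fix $Y_0 \in \partial\mathcal D(s)$ with $s(Y_0,0) < \infty$. Since $s(Y_0,0) = \sup_{x \in \hat{\mathcal L}}\langle x, Y_0\rangle$, for each $\delta>0$ pick $x_\delta \in \hat{\mathcal L}$ with $\langle x_\delta, Y_0\rangle \ge s(Y_0,0) - \delta$. Because $\mathcal L$ is noncompact there is a ray in $\hat{\mathcal L}$, and elongating along this ray from $x_\delta$ we build a family of ellipsoids $E_{j,\delta} \subset \hat{\mathcal L}$ exactly as in the proof of Proposition \ref{long-time}, chosen so that $s_{E_{j,\delta}}(Y_0) \to s(Y_0,0)-\delta$ as $j\to\infty$. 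By the transformation law (\ref{affine-transform}), each $E_{j,\delta}$ is a volume-preserving affine image of a Euclidean sphere whose radius tends to infinity with $j$, so the explicit shrinking-sphere solution (\ref{sphere-solution}) shows both that the extinction times tend to infinity and that on any fixed interval $[0,T]$ the drift $|s_{E_{j,\delta}}(Y_0, t) - s_{E_{j,\delta}}(Y_0, 0)|$ tends to $0$ as $j \to \infty$. The maximum principle at the smooth compact level gives $s_{E_{j,\delta}}(\cdot,t) \le s_i(\cdot,t)$ whenever $\widehat{E_{j,\delta}} \subset \widehat{\mathcal L_i}$; letting $i \to \infty$, then $j \to \infty$, then $\delta \to 0$, produces
\[
\liminf_{Y \to Y_0,\, Y \in \mathcal D^\circ(s)} s(Y, t) \;\ge\; s(Y_0, 0).
\]

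\emph{Domain invariance.} For $Y_0 \notin \mathcal D(s)$, the same elongated-ellipsoid construction, with the ray chosen so that $\langle x_\delta + tw, Y_0\rangle$ is unbounded above, produces inner barriers $E_{j,\delta}$ with $s_{E_{j,\delta}}(Y_0,0)$ arbitrarily large, forcing $s(Y_0,t) = +\infty$ for every $t$. Combined with the upper bound $s(\cdot,t) \le s(\cdot,0)$ this yields $\mathcal D(s(\cdot,t)) = \mathcal D(s)$, completing the Dirichlet condition. The main technical obstacle is the drift estimate for the elongated ellipsoids: one must verify that the elongation direction can be chosen so that the $Y_0$-component of the support function varies by $o(1)$ as $j \to \infty$, which reduces to a direct computation using (\ref{sphere-solution}) and (\ref{affine-transform}) and is essentially the same calculation already used to prove Proposition \ref{long-time}.
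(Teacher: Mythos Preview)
Your upper-bound step contains a false claim that the paper itself flags: immediately after stating this proposition the authors note that ``$s$ can be infinite or finite and \emph{discontinuous} on the boundary $\partial\mathcal D(s)$.'' The support function of a parabolic region already gives an example where $\limsup_{Y\to Y_0,\,Y\in\mathcal D^\circ} s(Y,0)$ strictly exceeds $s(Y_0,0)$, so your chain ``$\limsup s(Y,t)\le\limsup s(Y,0)=s(Y_0,0)$'' does not close. If the Dirichlet condition is read pointwise as $s(Y_0,t)=s(Y_0,0)$, then monotonicity in $t$ gives the upper half directly at $Y_0$ (since $s_i(Y_0,t)\le s_i(Y_0,0)\nearrow s(Y_0,0)$) and your detour through $Y\to Y_0$ is both unnecessary and incorrect. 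The paper's stated route is different: it uses Calabi's expanding soliton as an \emph{outer} barrier. A Calabi simplex with a vertex at $Y_0$ and affine initial data equal to $s(Y_0,0)$ there dominates $s(\cdot,0)$, hence dominates $s(\cdot,t)$; since the Calabi solution is explicitly continuous on the closed simplex, one reads off the boundary value without any appeal to regularity of $s(\cdot,0)$. This is exactly why the text says ``the same inner \emph{and outer} barriers'' are used---you have dropped the outer one.

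Your lower-bound idea is in the right spirit but the construction is incomplete. You need $E_{j,\delta}\subset\hat{\mathcal L}$ \emph{and} $s_{E_{j,\delta}}(Y_0)\to s(Y_0,0)-\delta$; elongating along an arbitrary recession direction $w$ does not buy the second condition, because $s(Y_0,0)<\infty$ forces $\langle w,Y_0\rangle\le 0$, so the $Y_0$-support of the elongated ellipsoid is controlled by its short axes and center, not by the long axis. You must argue separately that the center can be chosen so that a short-axis endpoint nearly touches the supporting hyperplane $\{\langle x,Y_0\rangle=s(Y_0,0)\}$ while the ellipsoid remains inside $\hat{\mathcal L}$; the mere existence of $x_\delta$ and a ray does not give this.
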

This proposition holds regardless of the boundary regularity---$s$
can be infinite or finite and discontinuous on the boundary
$\partial \mathcal D(s)$ \cite{rockafellar}.  We also use the
barriers to show
 \begin{pro}
For every $t>0$, $F=F(y,t)$ is properly embedded as a function of
$y$ for $(y,-1)\in \mathcal D^\circ(s)$.  In other words, as
$(y,-1)\to \partial\mathcal D^\circ(s)$, at least one coordinate of
$$F(y) = (s_1(y),\dots , s_n(y), s_k(y) y^k - s(y))$$ goes to
$\pm\infty$.
 \end{pro}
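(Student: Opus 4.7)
The plan is to reformulate the statement via convex duality and then apply the barriers from the preceding sections. Write $\tilde s(y,t)=s((y,-1),t)$, which by Theorem~\ref{smooth-conv} is smooth on $\mathcal D^\circ(\tilde s)\subset\R^n$, and is strictly convex there (the Hessian is positive definite by combining the Andrews speed bound in Proposition~\ref{andrews-speed} with the Guti\'errez--Huang estimate in Proposition~\ref{gh-est}). Euler's identity applied to the $1$-homogeneous $s$ gives
\[
F(y,t)=\bigl(\nabla\tilde s(y,t),\;y\cdot\nabla\tilde s-\tilde s(y,t)\bigr)=\bigl(p,\;\phi(p,t)\bigr),
\]
where $p=\nabla\tilde s(y,t)$ and $\phi:=\tilde s^*$ is the Legendre conjugate, which geometrically is the lower envelope function $\phi(p,t)=\inf\{z:(p,z)\in\mathcal L(t)\}$ of $\mathcal L(t)$.

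The easy subcase is $(y_k,-1)\to(y_0,-1)\in\partial\mathcal D^\circ(s)$ with $|y_0|<\infty$ and $s(y_0,-1,t)=+\infty$: as an increasing pointwise limit of continuous $s_i$, $s$ is lower semicontinuous, so $s(y_k,-1,t)\to+\infty$; then Euler's identity $s(y_k,-1,t)=\langle F(y_k,t),(y_k,-1)\rangle$ together with the boundedness of $(y_k,-1)$ forces $|F(y_k,t)|\to\infty$.

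The more delicate subcase is a finite boundary point $(y_0,-1)$ at which $s(y_0,-1,t)$ is finite. Here I would invoke the convex-analysis duality principle that strict convexity of $\phi(\cdot,t)$ on its domain is equivalent to essential smoothness of $\tilde s(\cdot,t)$, meaning $|\nabla\tilde s(y,t)|\to\infty$ as $y\to\partial\mathcal D^\circ(\tilde s)$ in $\R^n$. Strict convexity of $\phi$ follows from the smoothness and strict convexity of $\mathcal L(t)$, which in turn follow from the $C^2$ estimates cited above. Since the first $n$ coordinates of $F$ are $\nabla\tilde s$, we again conclude $|F(y_k,t)|\to\infty$.

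Finally, for the case $|y_k|\to\infty$ in $\R^n$, I would use the inner ellipsoid barrier of Proposition~\ref{long-time}, which furnishes a lower bound of the form $\tilde s(y,t)\ge\epsilon(t)\sqrt{|y|^2+1}+\langle P,y\rangle-C(t)$. After passing to a subsequence, either $|\nabla\tilde s(y_k,t)|\to\infty$ and the first $n$ coordinates of $F$ blow up, or $\nabla\tilde s(y_k,t)\to p^*$ is bounded; in the latter situation the essential smoothness of $\phi$ (dual to the strict convexity of $\tilde s$) forces $p^*\in\partial\mathcal D^\circ(\phi(\cdot,t))$, and combining the identity $\phi(p_k,t)=p_k\cdot y_k-\tilde s(y_k,t)$ with the inner-barrier lower bound on $\tilde s$ forces the last coordinate $\phi(p_k,t)$ of $F(y_k,t)$ to diverge. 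I expect the main obstacle to be this last step: rigorously controlling the divergence of $\phi(p_k,t)$ when $p_k$ remains bounded while $y_k\to\infty$, which amounts to combining the inner barrier with an appropriately placed affine image of the outer Calabi barrier of Section~5 to rule out any ``stray'' finite limit point of $\mathcal L(t)$ on $\partial\widehat{\mathcal L(t)}$ with a normal direction on the equator.
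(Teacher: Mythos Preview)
The paper does not give a self-contained proof here; it merely records the statement and refers to \cite{loftin-tsui06}, noting only that ``the same inner and outer barriers'' (the ellipsoids and the Calabi soliton) are used. So there is no detailed argument to compare against, but I can assess your proposal on its own merits.

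Your duality framework is the right one, and Case~1 (finite boundary point with $s=+\infty$) is correct as written. The gap is in Case~2. You want essential smoothness of $\tilde s$, i.e.\ $|\nabla\tilde s(y)|\to\infty$ as $y\to\partial\mathcal D^\circ(\tilde s)$, and you correctly cite the Rockafellar duality that this is equivalent to essential strict convexity of $\phi=\tilde s^*$. You then assert that strict convexity of $\phi$ ``follows from the smoothness and strict convexity of $\mathcal L(t)$, which in turn follow from the $C^2$ estimates cited above.'' This is circular. The $C^2$ estimates of Propositions~\ref{andrews-speed} and~\ref{gh-est} control the Hessian of $\tilde s$ on $\mathcal D^\circ(\tilde s)$; by duality that gives \emph{differentiability} of $\phi$ on $\nabla\tilde s(\mathcal D^\circ(\tilde s))$, not strict convexity of $\phi$. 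Strict convexity of $\phi$ is dual to essential smoothness of $\tilde s$, and the nontrivial part of essential smoothness is precisely the gradient blowup at $\partial\mathcal D^\circ(\tilde s)$ that you are trying to prove. Concretely, nothing in the interior $C^2$ estimates rules out a line segment in $\mathcal L(t)$ whose outward normal lies on $\partial\mathcal D^\circ(s)$; such a segment would make $\phi$ affine along a segment and destroy your argument.

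What is actually needed in Case~2 (and, as you suspect, in Case~3) is a direct barrier argument: place an affine image of Calabi's soliton so that a face of its simplicial domain lies along the supporting hyperplane of $\mathcal D^\circ(\tilde s)$ through $y_0$, use it as an upper barrier for $\tilde s$, and read off from the explicit solution (\ref{calabi-ex}) that the one-sided normal derivative of the barrier---hence of $\tilde s$---blows up as $y\to y_0$. That is the content hidden behind the paper's phrase ``we use the barriers,'' and it is exactly the step your duality shortcut tried to bypass.
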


\section{The evolution of $|C|^2$}
Here we recall an estimate of Andrews \cite{andrews96} on the
evolution of $|C|^2 = g^{il}g^{jm}g^{kp} C_{ijk} C_{lmp}$.  For
a compact strictly convex initial hypersurface evolving under
the affine normal flow, $$\left(\partial_t - \frac1{n+2}
\Delta\right) |C|^2 \le - \frac2{n(n+2)} |C|^4.$$ Then the
maximum principle shows that for all $t\in(0,T)$ for $T$ the
extinction time, \begin{equation} \label{C2-decay} |C|^2 \le
\frac{n(n+2)} {2t}
\end{equation}
 independently of initial conditions.

Since Theorem \ref{smooth-conv} above shows that $s_i\to s$ in
$C^\infty_{\rm loc}$ on $\mathcal D^\circ(s) \times (0,\infty)$, the
pointwise bound (\ref{C2-decay}) survives in the limit for any
solution to the affine normal flow beginning at time $t=0$. If the
flow begins at time $\tau$ instead, then of course we have $$|C|^2
\le \frac{n(n+2)} {2(t-\tau)},$$ and for an ancient solution
($\tau\to-\infty$), we must have $|C|^2=0$. In the following
section, we give a proof of the classical theorem of Berwald that
says that $C_{ijk}=0$ implies the hypersurface is quadric.  Thus any
ancient solution to the affine normal flow must be a quadric
hypersurface.  Since a hyperboloid cannot form part of an ancient
solution, we have

\begin{thm} \label{ancient-sol-thm}
Any ancient solution to the affine normal flow is a paraboloid
or an ellipsoid.
\end{thm}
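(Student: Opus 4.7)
The plan is to combine the cubic-form decay estimate (\ref{C2-decay}) with the Berwald-type classification from Section \ref{quadric-sec}, and then to separate the three convex quadric types according to whether they extend backward to $t=-\infty$.

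First I pin down the vanishing of the cubic form. Since the flow is ancient, for any fixed point $(y,t)\in\mathcal D^\circ(s)\times\re$ I can shift the initial time backward: for every $\tau<t$, the ancient solution restricted to $[\tau,t]$ is a smooth, strictly convex affine normal flow that has been running for time $t-\tau$, and the bound (\ref{C2-decay})---which passes to the limit by the $C^\infty_{\rm loc}$ convergence of Theorem \ref{smooth-conv}---yields $|C|^2(y,t)\le n(n+2)/[2(t-\tau)]$. Letting $\tau\to-\infty$ forces $|C|^2\equiv 0$, so the cubic tensor $C_{ijk}$ vanishes identically at every point and every time.

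Next I invoke the classical theorem of Berwald, to be reproved in Section \ref{quadric-sec} in terms of the support function, to conclude that $\mathcal L(t)$ lies in a quadric for every $t$. A smooth, convex, properly embedded quadric in $\re^{n+1}$ is, up to an affine transformation, one of three models: an ellipsoid, an elliptic paraboloid, or one sheet of a two-sheeted hyperboloid. It remains to exclude the hyperboloid. Using (\ref{affine-transform}) to reduce any quadric solution to standard self-similar form, one obtains an ODE for the scaling parameter analogous to the one solved explicitly in (\ref{sphere-solution}). The ellipsoidal case gives a scale function defined on $(-\infty,T)$ for some finite extinction time $T$; the paraboloid translates isometrically under the flow and is therefore eternal; but the hyperboloidal case produces a scale that degenerates to zero at a finite past time $t_*$, at which the hypersurface collapses onto its asymptotic cone, so the hyperboloid cannot be extended smoothly backward to $-\infty$.

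The main obstacle I expect is this last soliton analysis: showing by an explicit, or at least structural, computation that the hyperboloidal self-similar profile has a genuinely finite backward lifespan. Once that is in hand the theorem follows immediately, because $|C|^2\equiv 0$ together with the smoothness provided by Theorem \ref{smooth-conv} pins down the projective type of $\mathcal L(t)$ consistently across all $t$, and the only quadric types compatible with ancient existence are then the ellipsoid and the paraboloid.
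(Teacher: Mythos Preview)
Your proposal is correct and follows essentially the same line as the paper: use the decay estimate (\ref{C2-decay}), pass to the limit via Theorem \ref{smooth-conv}, let $\tau\to-\infty$ to get $|C|^2\equiv0$, apply Berwald's theorem (Section \ref{quadric-sec}), and rule out the hyperboloid. In fact you supply more detail on the last step than the paper does---the paper simply asserts ``Since a hyperboloid cannot form part of an ancient solution'' without the soliton ODE discussion you sketch.
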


\section{Quadric Hypersurfaces} \label{quadric-sec}
Now we prove a classical theorem of Berwald, that the cubic form
$C_{ijk}=0$ implies that the hypersurface is a quadric.  The first
step is to show that the hypersurface is an affine sphere (i.e.,
that $\xi = a F + V$ for a constant scalar $a$ and a constant vector
$V$).

Compute for $C_{ijk}=0$
\begin{equation} \label{c=0}s_{ijk}=\frac1{n+2} \Big(
s_{ij}(\ln D)_k +
 s_{jk}(\ln D)_i+s_{ki}(\ln D)_j\Big)\end{equation}
and differentiate to find
 \begin{equation}
\begin{split} & (n+2)s_{ijkl}\\
= \ & \Big( s_{ijl}(\ln D)_k + s_{ij}(\ln D)_{kl}+
 s_{jkl}(\ln D)_i +s_{jk}(\ln D)_{il} +s_{kil}(\ln D)_j +
s_{ki}(\ln D)_{jl}\Big)\\
= \ &  \frac{1}{n+2} (s_{ij}(\ln D)_l(\ln D)_k+ s_{jl}(\ln D)_i(\ln
D)_k+s_{li}(\ln D)_j(\ln D)_k)+s_{ij}(\ln D)_{kl}\\
+ \ & \frac{1}{n+2} (s_{jk}(\ln D)_l(\ln D)_i+ s_{kl}(\ln D)_j(\ln
D)_i+s_{lj}(\ln D)_k(\ln D)_i)+s_{jk}(\ln D)_{il}\\
+ \ & \frac{1}{n+2} (s_{ki}(\ln D)_l(\ln D)_j+ s_{il}(\ln D)_k(\ln
D)_j+s_{lk}(\ln D)_i(\ln D)_j)+s_{ki}(\ln D)_{jl}
\end{split}
 \end{equation}

\begin{equation}
\begin{split} & (n+2)s_{ilkj}\\
= \ &  \frac{1}{n+2} (s_{il}(\ln D)_j(\ln D)_k+ s_{lj}(\ln D)_i(\ln
D)_k+s_{ji}(\ln D)_l(\ln D)_k)+s_{il}(\ln D)_{kj}\\
+ \ & \frac{1}{n+2} (s_{lk}(\ln D)_j(\ln D)_i+ s_{kj}(\ln D)_l(\ln
D)_i+s_{jl}(\ln D)_k(\ln D)_i)+s_{lk}(\ln D)_{ij}\\
+ \ & \frac{1}{n+2} (s_{ki}(\ln D)_j(\ln D)_l+ s_{ij}(\ln D)_k(\ln
D)_l+s_{jk}(\ln D)_i(\ln D)_l)+s_{ki}(\ln D)_{lj}
\end{split}
 \end{equation}

Using $s_{ijkl}=s_{ilkj}$, we have
\begin{equation}
\begin{split} & s_{ij}( (\ln D)_{kl}-\frac{1}{n+2}(\ln D)_k(\ln
D)_l)+s_{jk}( (\ln D)_{li}-\frac{1}{n+2}(\ln D)_l(\ln
D)_i) \\
= \ & s_{il}( (\ln D)_{kj}-\frac{1}{n+2}(\ln D)_k(\ln D)_j)+s_{lk}(
(\ln D)_{ij}-\frac{1}{n+2}(\ln D)_i(\ln D)_l)
\end{split}
 \end{equation}

Multiplying $s^{ij}$ to previous equation, we get
\begin{equation}
\begin{split} & n ( (\ln D)_{kl}-\frac{1}{n+2}(\ln D)_k(\ln D)_l)+( (\ln
D)_{lk}-\frac{1}{n+2}(\ln D)_l(\ln D)_k)\\
 & = ( (\ln D)_{kl}-\frac{1}{n+2}(\ln D)_k(\ln D)_l)+s_{lk}s^{ij}(
(\ln D)_{ij}-\frac{1}{n+2}(\ln D)_i(\ln D)_l)
\end{split}
 \end{equation}
So $$n ( (\ln D)_{kl}-\frac{1}{n+2}(\ln D)_k(\ln D)_l)=s_{lk}s^{ij}(
(\ln D)_{ij}-\frac{1}{n+2}(\ln D)_i(\ln D)_l)$$

 Let $S$ be the  matrix $(s_{ij})$ and $T$ be the  matrix with
 $T_{ij}=(\ln D)_{ij}- \frac{ (\ln D)_i(\ln D)_j}{n+2}$.
 So we have $T = \frac{g^{ij}T_{ij}}{n}S$. Denote $\mbox{tr}\, T
 =g^{ij}T_{ij}$.

From $(n+2)\xi = D^{-\frac{1} {n+2}} ( (\ln D)_1, \dots,(\ln D)_n,
(n+2) + (\ln D)_i y^i )$. So for $\xi^i$ the $i^{\rm th}$ component
of $\xi$, \begin{equation}
\begin{split} &(n+2)\partial_j(\xi^i)\\
= \ & \partial_j (D^{-\frac{1} {n+2}}(\ln D)_i)\\
= \ & -\frac{1} {n+2}D^{-\frac{1} {n+2}}(\ln D)_j(\ln
D)_i+D^{-\frac{1} {n+2}}(\ln
D)_{ij}\\
= \ & D^{-\frac{1} {n+2}}T_{ij}= \frac{D^{-\frac{1} {n+2}} \,
\mbox{tr}\, T}{n}s_{ij}\end{split}
 \end{equation}for $ 1\le i \le n$.
Similarly, $$(n+2)\partial_j(\xi^{n+1})=D^{-\frac{1} {n+2}}
 ((\ln D)_{ij} - \frac{1} {n+2}(\ln D)_i(\ln
D)_j)y^i=D^{-\frac{1} {n+2}}T_{ij}y^i$$ $$= \frac{D^{-\frac{1}
{n+2}}\,\mbox{tr}\, T}{n}s_{ij}y^i$$ Therefore
$\xi_{,i}=\frac{D^{-\frac{1} {n+2}}\,\mbox{tr}\, T}{n}F_i$

Recall that $F_i = (s_{1i},\dots, s_{ni}, s_{li} y^l)$. We have
$\xi_{,i}=\frac{D^{-\frac{1} {n+2}}\,\mbox{tr}\, T}{n}F_i$. Affine
curvature is defined by $ \xi_{,i} = -A^k_i F_{,k}$. So $-A^k_i=
\frac{D^{-\frac{1} {n+2}}\,\mbox{tr}\, T}{n}\delta^k_i= a
\delta^k_i$ where $a= \frac{D^{-\frac{1} {n+2}}\,\mbox{tr}\, T}{n}$

Now the affine structure equations, applied to the second ordinary
derivative $\xi_{ij}$, shows
 \begin{eqnarray*}
 \xi_{ij} &=& (aF_i)_j \\
 &=& a_j F_i + aF_{ij} \\
 &=& a_j F_i + a(g_{ij} \xi + (\Gamma_{ij}^k  +C_{ij}^k)F_k \\
 &=& (a_j \delta^k_i + a\Gamma_{ij}^k)F_k + ag_{ij}\xi.
 \end{eqnarray*}
So $a_j\delta^k_i$ must be symmetric in $i,j$, and in particular,
$a_i \delta^k_k=a_k\delta^k_i$. Since $ n \ge 2$, we have $a_i=0$
for all $ i$. So $a$ is constant and $\xi_k=aF_k$ implies that $\xi
= a F + V$, where $V$ is a constant vector.

So far, we have shown
\begin{pro} \label{C-vanish-aff-sphere}
Let $n\ge2$. If $C_{ijk}=0$ then $\xi = a F + V$ for $V$ a constant
vector and $a$ a constant scalar.
\end{pro}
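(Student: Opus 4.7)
The plan is to leverage the hypothesis $C_{ijk}=0$, which by formula (\ref{cubic}) reduces to the third-order identity (\ref{c=0}), and then use a commutator argument on its fourth derivative to force the tensor
$$T_{ij} := (\ln D)_{ij} - \tfrac{1}{n+2}(\ln D)_i(\ln D)_j$$
to be a scalar multiple of the Hessian $s_{ij}$. Once this key algebraic fact is in hand, the explicit formula (\ref{an}) for $\xi$ forces $\partial_j \xi$ to be a scalar multiple of $F_{,j}$, i.e.\ the affine shape operator is scalar, and the affine structure equations then pin down that scalar.

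First I would differentiate (\ref{c=0}) in $y^\ell$, substituting the identity back in wherever a third derivative of $s$ reappears, to express $s_{ijk\ell}$ as a linear combination of terms of the form $s_{\bullet\bullet}T_{\bullet\bullet}$. The tensor $s_{ijk\ell}$ is fully symmetric in its four indices, being a mixed fourth partial of a smooth function; in particular $s_{ijk\ell}=s_{i\ell k j}$. Equating the two permuted expressions and cancelling common terms should leave the symmetric identity
$$s_{ij}T_{k\ell}+s_{jk}T_{\ell i} \;=\; s_{i\ell}T_{kj}+s_{\ell k}T_{ij}.$$
Contracting with the inverse Hessian $s^{ij}$ and using symmetry of $T$ and $s$ collapses this to $n\,T_{k\ell} = s_{k\ell}\,s^{ij}T_{ij}$, so $T$ is a scalar multiple of $S$ with proportionality factor $\operatorname{tr}_s T/n$.

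Next, I would compute $\partial_j \xi$ directly from (\ref{an}). Each derivative $\partial_j\bigl(D^{-1/(n+2)}(\ln D)_i\bigr)$ assembles naturally into a constant times $D^{-1/(n+2)}T_{ij}$, and the $(n{+}1)$st entry produces $D^{-1/(n+2)}T_{ij}y^i$ by the same calculation. Under $T_{ij}=(\operatorname{tr}_s T/n)\,s_{ij}$, each of these becomes a scalar multiple of the corresponding component of $F_{,j}=(s_{1j},\ldots,s_{nj},\,s_{\ell j}y^\ell)$. Hence $\xi_{,j}=a\,F_{,j}$ for a scalar function $a$, and comparison with the defining relation $\xi_{,i}=-A^k_i F_{,k}$ yields $A^k_i=-a\,\delta^k_i$.

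The final step, where the hypothesis $n\ge 2$ enters decisively, is to show $a$ is constant. Differentiating $\xi_j=aF_j$ once more and invoking the structure equation $F_{ij}=g_{ij}\xi+(\Gamma^k_{ij}+C^k_{ij})F_k$ with $C^k_{ij}=0$, one obtains $\xi_{ij}=a_j F_i + a\Gamma^k_{ij}F_k + a g_{ij}\xi$. Symmetry of $\xi_{ij}$ in $(i,j)$ then forces $a_j\,\delta^k_i=a_i\,\delta^k_j$; tracing over $k=i$ yields $n\,a_j = a_j$, hence $a_j=0$ for $n\ge 2$. With $a$ constant, integrating $\xi_i=aF_i$ produces $\xi=aF+V$ for some constant vector $V$. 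The main obstacle is the bookkeeping in the first step---differentiating the third-order identity, identifying the cancellations produced by $s_{ijk\ell}=s_{i\ell k j}$, and recognizing the tensor $T$ as the correct object to track; everything after that is a routine unwinding of the structure equations.
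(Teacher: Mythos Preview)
Your proposal is correct and follows essentially the same route as the paper: differentiate (\ref{c=0}), use the symmetry $s_{ijk\ell}=s_{i\ell kj}$ to obtain $s_{ij}T_{k\ell}+s_{jk}T_{\ell i}=s_{i\ell}T_{kj}+s_{\ell k}T_{ij}$, contract with $s^{ij}$ to force $T$ proportional to $S$, then read off $\xi_{,j}=aF_{,j}$ from (\ref{an}) and use symmetry of $\xi_{ij}$ in the structure equations to show $a$ is constant. The paper's argument is identical in substance, differing only in that it writes out the fourth-derivative computation in full rather than summarizing it.
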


The rest of the proof of the following theorem follows Nomizu-Sasaki
\cite{nomizu-sasaki}.

\begin{thm} \label{cubic-quadric}
Assume $n\ge2$. If the cubic form $C_{ijk}=0$, then the hypersurface
given by the image of $F$ is a quadric hypersurface.  In other
words, there is a second-degree polynomial map $\mathcal P\!:
\re^{n+1} \to \re$ so that $\mathcal L$ is an open subset of
$\{\mathcal P=0\}$.
\end{thm}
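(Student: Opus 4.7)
The plan is to split on whether the scalar $a$ from Proposition~\ref{C-vanish-aff-sphere} vanishes. In the improper case $a=0$, $\xi\equiv V$ is a fixed nonzero vector. Using the transformation law (\ref{affine-transform}) for the support function, I apply a volume-preserving affine map of $\re^{n+1}$ so that $V$ points in the direction $(0,\dots,0,c)$ with $c>0$. Matching this against the formula (\ref{an}) for $\xi$ forces $(\ln D)_j\equiv 0$ for $j=1,\dots,n$, so $D=\det s_{ij}$ is constant. Plugging back into (\ref{c=0}) then yields $s_{ijk}\equiv 0$, so $s$ is a polynomial of degree at most two in $y$, and $F=(s_1,\dots,s_n,s_iy^i-s)$ parameterizes a paraboloid, which is a quadric.

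In the proper case $a\neq 0$, I translate $\mathcal L$ by $V/a$; by (\ref{affine-transform}) this shifts $s$ by a linear function of $Y$ and leaves $D$ invariant, while the affine normal becomes $\xi=aF$. Comparing the last coordinate of $\xi$ in (\ref{an}) with $a(s_iy^i-s)$ produces the soliton identity $D^{-1/(n+2)}=-as$, equivalently
\[
(\ln D)_k = -(n+2)\,\frac{s_k}{s}.
\]
Substituting this into (\ref{c=0}) gives
\[
s\,s_{ijk} = -\bigl(s_{ij}s_k+s_{jk}s_i+s_{ki}s_j\bigr).
\]
The decisive observation is that $\psi:=s^2/2$ then satisfies $\psi_{ijk}\equiv 0$, since the expansion
\[
\psi_{ijk} = s_{jk}s_i+s_{ik}s_j+s_{ij}s_k+s\,s_{ijk}
\]
cancels on the nose against the previous display. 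Hence $s^2$ is a polynomial of degree at most two in $y$ on the slice $\{Y^{n+1}=-1\}$. By homogeneity of degree two of $s^2$ in $Y\in\re^{n+1}$, it extends to a global quadratic form $s(Y)^2 = Y^\top M Y$ for some symmetric $M$. Since $F=\nabla_Y s = MY/s$ (using Euler's identity on the slice, as in Section~2, to identify the $(n+1)$-st component), a direct computation gives $F^\top M^{-1}F = Y^\top MY/s^2 \equiv 1$, so the image of $F$ lies on this quadric.

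The main obstacle is bookkeeping rather than insight: verifying that the coordinate choice for $V$ in the $a=0$ case and the translation in the $a\neq0$ case both respect (\ref{affine-transform}) and the standing convention that $\mathcal{D}^\circ(s)\subset\{Y^{n+1}<0\}$, and tracking signs so that $-as>0$ and $M$ is nondegenerate on the relevant domain. Once the soliton relation $(\ln D)_k=-(n+2)s_k/s$ is in hand, the identity $\psi_{ijk}=0$ is an elementary cancellation that effectively finishes the proof.
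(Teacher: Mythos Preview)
Your argument is correct and takes a genuinely different route from the paper's. The paper follows Nomizu--Sasaki: at each point $F(y)$ it introduces the \emph{Lie quadric} $\mathcal F_y=\{P: g_{ij}U^iU^j-a\mu^2-2\mu=0\}$, where $(U^i,\mu)$ are the coordinates of $P-F(y)$ in the affine frame $\{F_1,\dots,F_n,\xi\}$, and then shows by a direct computation of $\Phi_k$ (using the structure equations with $C^k_{ij}=0$ and $\xi_k=aF_k$) that all of $\mathcal L$ lies in $\mathcal F_y$. No case split on $a$ is made, and the quadric is produced geometrically rather than through the support function.

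Your approach instead stays entirely inside the support-function calculus of Section~2. The decisive step---that the soliton relation $(\ln D)_k=-(n+2)s_k/s$ forces $(s^2/2)_{ijk}\equiv0$---is a clean shortcut: it converts $C_{ijk}=0$ directly into the statement that $s^2$ (or $s$ itself, when $a=0$) is a polynomial of degree at most two, and the quadric equation for the image of $F$ drops out. The one point you flag but do not settle is the invertibility of $M$. It does hold: from $sF=MY$ one gets $Me_k=s_kF+sF_k$, and together with the column identity $\det[F\,|\,F_1\,|\cdots|\,F_n]=(-1)^{n+1}sD$ this yields $\det M=s^{n+2}D=(-a)^{-(n+2)}\neq0$, a constant as it must be. What the paper's approach buys is a unified argument (no split on the sign of $a$) and a direct link to the classical affine-differential-geometry literature; what yours buys is a proof that never leaves the analytic framework of the paper and makes the quadric explicit as $\{x:x^\top M^{-1}x=1\}$ in the proper case and as a graph $x^{n+1}=\tfrac12 Q^{ij}(x^i-b_i)(x^j-b_j)-c$ in the improper case.
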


\begin{proof}
Let $\mathcal L$ denote our hypersurface with is (locally) the image
of the embedding $F$.  For each $x= F(y) \in \mathcal L$, since
$\{F_1,\dots,F_n,\xi\}$ is a basis of $\R^{n+1}$, we can write each
point $P\in\re^n$ uniquely as
\begin{equation} \label{mu-U-def}
  P = F(y) + U^i_P(y)
F_i(y) + \mu_P(y) \xi(y).
\end{equation}
Then the \emph{Lie quadric} of $\mathcal L$ at $x=F(y)$ is defined
as the locus
$$ \mathcal F_y = \{P\in\re^{n+1} : g_{ij} U^i U^j - a  \mu^2 -
2\mu = 0\},$$ where $a$ is the constant determined in Proposition
\ref{C-vanish-aff-sphere} above and $g_{ij}=g_{ij}(y)$.  For each
$y$, $\mathcal F_y$ is clearly a quadric hypersurface in
$\re^{n+1}$.

Now we will show that for each $x\in \mathcal L$, that $\mathcal L
\subset \mathcal F_x$.  By dimension considerations, this show that
$\mathcal L$ is an open subset of the quadric $\mathcal F_x$, and we
are done.  Now consider $y_0$ for $F(y_0)=P\in\mathcal L$, and
consider $U^i$ and $\mu$ defined in (\ref{mu-U-def}) above as
functions of $y$ with $y_0$ fixed.  Now differentiate
(\ref{mu-U-def}) to find for $k=1,\dots,n$ and $U^i_k = \partial_k
U^i$,
$$ 0 = \partial_k P = U^i_k F_i + U^i F_{ik} + \mu_k \xi + \mu
\xi_k.$$
 By Proposition \ref{C-vanish-aff-sphere}, $\xi_k = a F_k$,
and also $F_{ik} = (\Gamma^j_{ik} + C^j_{ik}) F_j + g_{ik}\xi$ for
$C^j_{ik}$ the cubic form and $\Gamma^j_{ik}$ the Levi-Civita
connection with respect to the affine metric $g_{ij}$.  Since we
assume the cubic form is zero, we have $F_{ik} = \Gamma^j_{ik} F_j +
g_{ik}\xi$.  Thus
$$ 0 = U^i_k F_i + U^i(\Gamma^j_{ik} F_j + g_{ik}\xi) + \mu_k \xi +
\mu a F_k,$$ and by splitting into the components on the basis
$\{F_1,\dots, F_n, \xi\}$, we find
 \begin{eqnarray} \label{Uik-eq}
 U^j_k &=& - U^i \Gamma^j_{ik} - (1+a\mu)\delta^j_k \quad \mbox{for }
 j,k=1,\dots,n,  \\
\label{mu-k-eq}
 \mu_k &=& - U^ig_{ik} \quad \mbox{for }k=1,\dots,n.
 \end{eqnarray}

 Now define $\Phi\!:\mathcal L\to \re$ by $$\Phi(y) = g_{ij} U^i U^j
 - a \mu^2 - 2\mu = D^{\frac1{n+2}} s_{ij} U^i U^j - a \mu^2 -
 2\mu.$$
 Note $\Phi(y_0)=0$ since by definition $U^i(y_0)=\mu(y_0)=0$.  So
if we show $\Phi_k = 0$, then $\Phi(y)=0$ for all $y$.  By the
definitions of $\Phi,U^i,\mu$, then we will have shown $y_0\in
\mathcal F_y$ and so $\mathcal L \subset \mathcal F_y$.

So in order to complete the proof of the theorem, we must check
$\Phi_k=0$. So compute, using (\ref{Uik-eq}) and (\ref{mu-k-eq})
above,
\begin{eqnarray*}
 \Phi_k &=& \frac1{n+2} \, D^{\frac1{n+2}} (\ln D)_k s_{ij} U^i U^j
 +D^{\frac1{n+2}} s_{ijk} U^iU^j + 2D^{\frac1{n+2}} s_{ij} U^i_k U^j
\\&&{} - 2a\mu \mu_k -2\mu_k \\
&=& \frac1{n+2} \, D^{\frac1{n+2}} (\ln D)_k s_{ij} U^i U^j +
D^{\frac1{n+2}} s_{ijk} U^iU^j  + 2(a\mu+1)U^i g_{ik} \\
&&{}+2D^{\frac1{n+2}} s_{ij} U^j[ -U^l \Gamma^i_{lk} - (1+a\mu)
\delta^i_k] \\
&=& \frac1{n+2} \, D^{\frac1{n+2}} (\ln D)_k s_{ij} U^i U^j +
D^{\frac1{n+2}} s_{ijk} U^iU^j - 2D^{\frac1{n+2}} s_{ij} U^j U^l
\Gamma^i_{lk} \\
&=& \frac1{n+2} \, D^{\frac1{n+2}} (\ln D)_k s_{ij} U^i U^j +
D^{\frac1{n+2}} s_{ijk} U^iU^j - D^{\frac1{n+2}} s_{ij} \left[
\frac1{n+2} (\ln D)_k \delta^i_l \right. \\
&& \left. {}+ \frac1{n+2} (\ln D)_l \delta^i_k + s^{im} s_{lkm} -
\frac1{n+2} s^{im} (\ln D)_m s_{lk} \right] \\
&=&0.
\end{eqnarray*}
This completes the proof of Theorem \ref{cubic-quadric}.
\end{proof}

\bibliographystyle{abbrv}
\bibliography{thesis}

 \end{document}